\newtheorem{thm}{Theorem}[section]
\newtheorem{cor}[thm]{Corollary}
\newtheorem{lem}[thm]{Lemma}
\numberwithin{equation}{section}
\renewcommand{\thefootnote}{}
\begin{document}

\begin{center}
{\large\bf Multidimensional Rogers--Ramanujan type identities\\ with
parameters
 \footnote{The corresponding author$^*$. Email addresses: weichuanan78@163.com (C. Wei), ythainmc@163.com (G. Ruan), yuanboyu81@163.com (Y. Yu).}}
\end{center}

\renewcommand{\thefootnote}{$\dagger$}

\vskip 2mm \centerline{$^1$Chuanan Wei, $^2$Guozhu Ruan, $^1$Yuanbo
Yu$^*$}
\begin{center}
{$^1$School of Biomedical Information and Engineering\\
  Hainan Medical University, Haikou 571199, China\\
  $^2$Medical Simulation Education Center\\
Hainan Medical University, Haikou 571199, China}
\end{center}


\vskip 0.7cm \noindent{\bf Abstract.} Via the contour integral
method, we establish a reduction formula from a double series to a
single series with parameters, which not only implies Uncu and
Zudilin's two results and Cao and Wang's two results, but also is
related to Berkovich and Warnaar's equation. Similarly, we also
discover some triple-sum generalizations of  Cao and Wang's
formulas. As conclusions, several multidimensional Rogers--Ramanujan
type identities with parameters or  without parameters are given.

\vskip 3mm \noindent {\it Keywords}: Rogers--Ramanujan type
identities; Jacobi's product triple identity; contour integral
method; bisection method

 \vskip 0.2cm \noindent{\it AMS
Subject Classifications:} 33D15; 11A07; 11B65

\section{Introduction}

 For any complex numbers $x$, $q$ with $|q|<1$ and nonnegative integer $n$, define the $q$-shifted factorial
 to be
 \begin{equation*}
(x;q)_{\infty}=\prod_{k=0}^{\infty}(1-xq^k)\quad\text{and}\quad
(x;q)_n=\frac{(x;q)_{\infty}}{(xq^n;q)_{\infty}}.
 \end{equation*}
For simplicity, we usually adopt the compact notation
\begin{equation*}
(x_1,x_2,\dots,x_m;q)_{n}=(x_1;q)_{n}(x_2;q)_{n}\cdots(x_m;q)_{n},
 \end{equation*}
where $m\in\mathbb{Z}^{+}$ and $n\in\mathbb{Z}^{+}\cup\{0,\infty\}.$
Following Gasper and Rahman \cite{Gasper}, define the basic
hypergeometric series as
$$
_{r+1}\phi_{r}\left[\begin{array}{c}
a_1,a_2,\ldots,a_{r+1}\\
b_1,b_2,\ldots,b_{r}
\end{array};q,\, z
\right] =\sum_{k=0}^{\infty}\frac{(a_1,a_2,\ldots, a_{r+1};q)_k}
{(q,b_1,\ldots,b_{r};q)_k}z^k.
$$

The famous Rogers--Ramanujan identities are
\begin{align}
&
\sum_{k=0}^{\infty}\frac{q^{k^2}}{(q;q)_k}=\frac{1}{(q,q^4;q^5)_{\infty}},
\label{RR-a}
\end{align}
\begin{align}
 &
\sum_{k=0}^{\infty}\frac{q^{k^2+k}}{(q;q)_k}=\frac{1}{(q^2,q^3;q^5)_{\infty}}.
\label{RR-b}
\end{align}
The bilateral generalizations of them can be seen in the  paper
\cite{Schlosser}.

In 2019,  Kanade and Russell \cite{Kanade} proposed nine conjectured
multidimensional Rogers--Ramanujan type identities related to level
2 characters of the affine Lie algebra $A_9^{(2)}$. Five of them are
confirmed by Bringmann, Jennings-Shaffer, and Mahlburg \cite{BJM}.
Rosengren \cite{Rosengren} proved all of the nine formulas by the
contour integral method.

Recently, Uncu and Zudilin \cite{Uncu} proved the following two
interesting identities:
\begin{align}
&\sum_{j,\,k\geq0}\frac{q^{j^2+2jk+2k^2}}{(q;q)_j(q^2;q^2)_k}
=\frac{(q^3;q^3)_{\infty}^2}{(q;q)_{\infty}(q^6;q^6)_{\infty}},
\label{UZ-a}\\[2mm]
&\sum_{j,\,k\geq0}\frac{q^{j^2+2jk+2k^2+j+2k}}{(q;q)_j(q^2;q^2)_k}
=\frac{(q^6;q^6)_{\infty}^2}{(q^2;q^2)_{\infty}(q^3;q^3)_{\infty}}.
\label{UZ-b}
\end{align}
Ole Warnaar has pointed that \eqref{UZ-a} and \eqref{UZ-b} are
instances of Bressoud's results from \cite{Bressoud}. Though the
contour integral method, Wang \cite{Wang} recovered \eqref{UZ-a} and
\eqref{UZ-b} and Cao and Wang \cite[Theorem 3.8]{Cao} found the
following two formulas:
\begin{align}
&\sum_{j,\,k\geq0}\frac{q^{j^2+2jk+2k^2}}{(q;q)_j(q^2;q^2)_k}(-1)^jx^{j+k}
=(qx;q^2)_{\infty},
 \label{wang-a}
\\[2mm]
&\sum_{j,\,k\geq0}\frac{q^{j^2+2jk+2k^2+k}}{(q;q)_j(q^2;q^2)_k}x^{j+2k}
=(-qx;q)_{\infty},
  \label{wang-b}
\end{align}
where $x$ is an arbitrary complex number. For more conclusions, the
reader is referred to  \cite{Andrews-b,Cao-b,
Kur,Ramanujan,Slater,Wang-b}. Inspired by the works just mentioned,
we shall establish the following theorem.

\begin{thm}\label{thm-a}
Let $x$ and $y$ be complex numbers. Then
\begin{align}
\sum_{j,\,k\geq0}\frac{q^{j^2+2jk+2k^2-j-k}}{(q;q)_j(q^2;q^2)_k}x^{j}y^{2k}
=(y;q)_{\infty}
\sum_{k=0}^{\infty}\frac{(-x/y;q)_k}{(q;q)_k(y;q)_k}q^{\binom{k}{2}}y^k.
\label{eq:wei-a}
\end{align}
\end{thm}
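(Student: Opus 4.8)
The plan is to compare the two sides coefficientwise as power series in $x$ and to reduce the whole identity to its $x=0$ instance. Writing $f_a(y)$ and $g_a(y)$ for the coefficients of $x^a$ on the left and right, I would first read off $f_a$ by taking $j=a$,
\[
f_a(y)=\frac{1}{(q;q)_a}\sum_{k\ge0}\frac{q^{a^2+2ak+2k^2-a-k}}{(q^2;q^2)_k}\,y^{2k},
\]
and observe, by a direct comparison of exponents (the summand of $f_{a-1}(qy)$ matches that of $f_a$ up to the factor $q^{2a-2}$ and the ratio of prefactors), that
\[
f_a(y)=\frac{q^{2a-2}}{1-q^a}\,f_{a-1}(qy),\qquad a\ge1.
\]
On the right I would use $[x^a](-x/y;q)_k=y^{-a}q^{\binom a2}\binq ka$ and $\binq ka/(q;q)_k=1/\big((q;q)_a(q;q)_{k-a}\big)$, and then shift $k\mapsto a+m$, to get
\[
g_a(y)=\frac{(y;q)_\infty\,q^{\binom a2}}{(q;q)_a}\sum_{m\ge0}\frac{q^{\binom{a+m}{2}}}{(q;q)_m\,(y;q)_{a+m}}\,y^{m}.
\]
Using $(qy;q)_{a-1+m}=(y;q)_{a+m}/(1-y)$ together with the elementary relation $\binom a2+\binom{a+m}2=(2a-2)+\binom{a-1}2+\binom{a-1+m}2+m$, the same recursion $g_a(y)=\frac{q^{2a-2}}{1-q^a}g_{a-1}(qy)$ follows.

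Since both families satisfy the same first-order recursion, iterating it expresses $f_a$ and $g_a$ through $f_0$ and $g_0$ evaluated at $q^ay$ with identical prefactors. Hence the theorem reduces entirely to the single base case
\[
\sum_{k\ge0}\frac{q^{2k^2-k}}{(q^2;q^2)_k}\,y^{2k}=(y;q)_\infty\sum_{k\ge0}\frac{q^{\binom k2}}{(q;q)_k\,(y;q)_k}\,y^{k},
\]
which is where I expect the real difficulty to sit, since it equates a base-$q^2$ series on the left with a base-$q$ product times a base-$q$ series on the right.

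For this base case I would write $(y;q)_\infty/(y;q)_k=(yq^k;q)_\infty$, expand the latter by Euler's formula $(t;q)_\infty=\sum_i(-1)^iq^{\binom i2}t^i/(q;q)_i$, and interchange summations. Collecting the coefficient of $y^n$ and invoking the identity $\binom{n-i}2+\binom i2+i(n-i)=\binom n2$ turns the inner sum into $q^{\binom n2}(q;q)_n^{-1}\sum_{i=0}^n(-1)^i\binq ni$. The crux is then Gauss's evaluation $\sum_{i=0}^n(-1)^i\binq ni=0$ for odd $n$ and $(q;q^2)_m$ for $n=2m$; combined with the splitting $(q;q)_{2m}=(q;q^2)_m(q^2;q^2)_m$ this collapses the right-hand side to $\sum_m q^{2m^2-m}y^{2m}/(q^2;q^2)_m$, matching the left. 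It remains only to record the domains of $x,y,q$ for which both sides are analytic in $x$, so that equality of all coefficients of $x^a$ yields the identity. The main obstacle throughout is recognizing that Gauss's alternating $q$-binomial sum is exactly the mechanism converting the base-$q$ data on the right into the base-$q^2$ series on the left.
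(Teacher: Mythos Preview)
Your argument is correct and proceeds along a genuinely different path from the paper's. The paper establishes the identity by the contour integral method: it specializes a contour-integral lemma from Gasper--Rahman (Lemma~\ref{lem-a}) to evaluate
\[
\oint \frac{(xz,q,qz,1/z;q)_\infty}{(y^2z^2;q^2)_\infty}\,\frac{dz}{2\pi iz}
\]
in closed form, matches the resulting sum with the right-hand side of \eqref{eq:wei-a} via Heine's transformation together with a three-term ${}_2\phi_1$ relation, and then expands the same integrand by Euler and Jacobi to recognize the left-hand side. Your route is more elementary and self-contained: the recursion in $a$ reduces everything to the $x=0$ instance (the paper's Corollary~\ref{cor}, itself a special case of a Berkovich--Warnaar identity), and your handling of that base case through Gauss's evaluation of $\sum_i(-1)^i\tbnq{n}{i}$ is a clean direct proof that avoids contour integrals and transformation formulas entirely. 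The paper's approach has the advantage of fitting into a uniform machinery that it reuses verbatim for the triple-sum Theorems~\ref{thm-d}--\ref{thm-f}; yours has the advantage of isolating precisely the combinatorial mechanism---the parity collapse of the alternating $q$-binomial sum---that effects the passage from base $q$ on the right to base $q^2$ on the left.
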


Choosing $(x,y)=(q,q^{\frac{1}{2}})$ in Theorem \ref{thm-a} and then
calculating the series on the right-hand side by Ramanujan's formula
(cf. \cite[Entry 5.3.2]{Andrews-b}):
\begin{align*}
 \sum_{k=0}^{\infty}\frac{(-q;q^2)_k}{(q;q)_{2k}}q^{k^2}
=\frac{(q^6;q^6)_{\infty}^2}{(q;q)_{\infty}(q^{12};q^{12})_{\infty}},
\end{align*}
we catch hold of \eqref{UZ-a}. Fixing $(x,y)=(q^2,q^{\frac{3}{2}})$
in Theorem \ref{thm-a} and then evaluating the series on the
right-hand side by Ramanujan's another formula (cf. \cite[Entry
3.4.4]{Andrews-b}):
\begin{align*}
 \sum_{k=0}^{\infty}\frac{(-q;q^2)_k}{(q;q)_{1+2k}}q^{k^2+2k}
=\frac{(q^{12};q^{12})_{\infty}(-q^6;q^6)_{\infty}}{(q;q)_{\infty}(-q^{2};q^{2})_{\infty}},
\end{align*}
we get hold of \eqref{UZ-b}.

 Taking $(x,y)\to (-qx,-q^{\frac{1}{2}}x^{\frac{1}{2}})$ in Theorem \ref{thm-a} and then computing the
series on the right-hand side by Euler's $q$-exponential formula
(cf. \cite[Appendix (II.2)]{Gasper}):
\begin{align}
 \sum_{k=0}^{\infty}\frac{q^{\binom{k}{2}}z^{k}}{(q;q)_k}=(-z;q)_{\infty},
\label{Euler-a}
\end{align}
 we arrive at \eqref{wang-a}. Letting $(x,y)\to (qx,-qx)$ in Theorem \ref{thm-a}, we are led to \eqref{wang-b}.

When $x=0$, Theorem \ref{thm-a} produces the following special case
of  Berkovich and Warnaar \cite[Equation (3.10)]{Berkovich}.

\begin{cor}\label{cor}
Let $y$ be a complex number. Then
\begin{align*}
 \sum_{k=0}^{\infty}\frac{q^{2k^2-k}y^{2k}}{(q^2;q^2)_k}
=(y;q)_{\infty}\sum_{k=0}^{\infty}\frac{q^{\binom{k}{2}}y^k}{(q;q)_k(y;q)_k}.
\end{align*}
\end{cor}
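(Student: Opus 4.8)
The plan is to obtain the corollary as the $x = 0$ specialization of Theorem~\ref{thm-a}, exactly as the surrounding text announces. The key observation is that the substitution $x = 0$ is an \emph{exact} term-by-term evaluation of both sides of \eqref{eq:wei-a}, not a limiting procedure, so no question of interchanging a limit with an infinite sum ever arises.

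First I would examine the left-hand side of \eqref{eq:wei-a}. The only dependence on $x$ there is through the monomial $x^{j}$, and $0^{j} = 0$ for every $j \geq 1$ while $0^{0} = 1$. Hence setting $x = 0$ annihilates every term with $j \geq 1$ and leaves precisely the $j = 0$ slice of the double sum. Using $(q;q)_0 = 1$ and simplifying the exponent $j^2 + 2jk + 2k^2 - j - k$ at $j = 0$ to $2k^2 - k$, this slice is $\sum_{k \geq 0} q^{2k^2 - k} y^{2k}/(q^2;q^2)_k$, which is the left-hand side of the corollary.

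Next I would turn to the right-hand side of \eqref{eq:wei-a}. Here $x$ enters only through the factor $(-x/y;q)_k$, and for $y \neq 0$ the substitution $x = 0$ gives $(0;q)_k = \prod_{i=0}^{k-1}(1 - 0\cdot q^i) = 1$ for all $k$. The prefactor $(y;q)_\infty$ and the remaining quantities $q^{\binom{k}{2}} y^k/\bigl((q;q)_k (y;q)_k\bigr)$ are untouched, so the right-hand side collapses to $(y;q)_\infty \sum_{k \geq 0} q^{\binom{k}{2}} y^k/\bigl((q;q)_k (y;q)_k\bigr)$, matching the corollary.

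The one place where I would exercise a little care is the appearance of the quotient $x/y$, which formally requires $y \neq 0$; I would therefore establish the identity first for all $y \neq 0$ as above, and then recover the degenerate case $y = 0$ by a direct check, where both sides collapse to their $k = 0$ terms and equal $1$ (using the conventions $0^0 = 1$ and $(0;q)_0 = 1$). Since Theorem~\ref{thm-a} does all the analytic work, there is no genuine obstacle here; the corollary is purely a specialization, and it simultaneously serves as a consistency check against Berkovich and Warnaar's equation~(3.10) cited in the text.
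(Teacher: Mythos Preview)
Your proposal is correct and follows exactly the paper's approach: the paper itself derives the corollary in one line by setting $x=0$ in Theorem~\ref{thm-a}, and you have simply unpacked that specialization carefully on both sides. Your extra remarks about the $y=0$ case and the non-limiting nature of the substitution are sound but go beyond what the paper bothers to say.
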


Recall a united generalization of \eqref{RR-a} and \eqref{RR-b} due
to K. Garrett, Ismail, and Stanton \cite{Garrett}:
\begin{align}
\sum_{k=0}^{\infty}\frac{q^{k^2+mk}}{(q;q)_{k}}
=\frac{(-1)^mq^{-\binom{m}{2}}E_{m-2}(q)}{(q,q^4;q^5)_{\infty}}-\frac{(-1)^mq^{-\binom{m}{2}}D_{m-2}(q)}{(q^2,q^3;q^5)_{\infty}},
\label{GST}
\end{align}
where $m$ is an integer and the Schur polynomials $D_m(q)$ and
$E_m(q)$ are defined by
\begin{align*}
&D_m(q)=D_{m-1}(q)+q^mD_{m-2}(q),\quad D_0(q)=1,\:D_1(q)=1+q,
\\[2mm]
&\qquad E_m(q)=E_{m-1}+q^mE_{m-2}(q),\quad E_0(q)=1,\:E_1(q)=1.
\end{align*}
Letting $(q,y)\to (q^2,q^{1+2m})$ in Corollary \ref{cor} and using
\eqref{GST}, we obtain the following conclusion.

\begin{cor}\label{cor-a}
Let $m$ be an integer. Then
\begin{align}
&\sum_{k=0}^{\infty}\frac{q^{k^2+2mk}}{(q^2;q^2)_{k}(q^{1+2m};q^2)_{k}}
=\frac{(-1)^mq^{2m-2m^2}E_{m-2}(q^4)}{(q^{1+2m};q^2)_{\infty}(q^4,q^{16};q^{20})_{\infty}}
\notag\\[2mm]&\qquad\qquad\qquad\qquad\qquad\qquad
-\frac{(-1)^mq^{2m-2m^2}D_{m-2}(q^4)}{(q^{1+2m};q^2)_{\infty}(q^8,q^{12};q^{20})_{\infty}}.
\label{relation-a}
\end{align}
\end{cor}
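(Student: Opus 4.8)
The plan is to derive Corollary \ref{cor-a} directly from Corollary \ref{cor} by the substitution $(q,y)\to(q^2,q^{1+2m})$, and then to rewrite the resulting single series on the right-hand side so that \eqref{GST} can be applied. First I would perform the substitution in Corollary \ref{cor}. On the left-hand side, $q^{2k^2-k}$ becomes $q^{2(2k^2)-2k}=q^{4k^2-2k}$ under $q\to q^2$, and $y^{2k}=q^{(1+2m)2k}=q^{2k+4mk}$, while $(q^2;q^2)_k\to(q^4;q^4)_k$; I expect these to combine and simplify. On the right-hand side, $(y;q)_{\infty}\to(q^{1+2m};q^2)_{\infty}$, the factor $(y;q)_k\to(q^{1+2m};q^2)_k$, $(q;q)_k\to(q^2;q^2)_k$, and $q^{\binom{k}{2}}y^k\to q^{2\binom{k}{2}}q^{(1+2m)k}=q^{k^2-k+k+2mk}=q^{k^2+2mk}$. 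The key observation I anticipate is that after moving $(q^{1+2m};q^2)_{\infty}$ across, the right-hand sum takes the shape $\sum_{k\ge0}q^{k^2+2mk}/[(q^2;q^2)_k(q^{1+2m};q^2)_k]$, which is exactly the left-hand side of \eqref{relation-a}; this means the nontrivial content lies entirely in evaluating the \emph{other} side via \eqref{GST}.

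Next I would turn to the series that \eqref{GST} governs, namely $\sum_{k\ge0}q^{k^2+mk}/(q;q)_k$. The strategy is to arrange that, after the substitution, one of the two sides of Corollary \ref{cor} matches this Garrett--Ismail--Stanton series up to a dilation $q\to q^4$. Indeed, replacing $q$ by $q^4$ in \eqref{GST} turns $q^{k^2+mk}$ into $q^{4k^2+4mk}$ and $(q;q)_k$ into $(q^4;q^4)_k$; comparing with the simplified left-hand side of Corollary \ref{cor} after substitution, I would check that the exponents agree and that the Schur-polynomial side $E_{m-2}(q^4)$, $D_{m-2}(q^4)$ together with the Rogers--Ramanujan products $(q,q^4;q^5)_{\infty}$, $(q^2,q^3;q^5)_{\infty}$ correctly dilate to $(q^4,q^{16};q^{20})_{\infty}$ and $(q^8,q^{12};q^{20})_{\infty}$. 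The prefactor $(-1)^mq^{-\binom{m}{2}}$ should, under $q\to q^4$, become $(-1)^mq^{-4\binom{m}{2}}=(-1)^mq^{-2m^2+2m}=(-1)^mq^{2m-2m^2}$, matching the stated coefficient exactly; this agreement is a good consistency check that the intended dilation is $q\to q^4$.

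The main obstacle I expect is bookkeeping of the exponents and the precise alignment of the two single-sum sides so that \eqref{GST} applies to the correct side with the correct dilation. Specifically, Corollary \ref{cor} equates a theta-type single sum on its left with a Lambert-type single sum on its right, and I must identify which of these, after $(q,y)\to(q^2,q^{1+2m})$, becomes the dilated Garrett--Ismail--Stanton sum and which becomes the target series in \eqref{relation-a}. The delicate point is tracking the shift $m\to m-2$ in the Schur polynomials: \eqref{GST} evaluates $\sum_k q^{k^2+mk}/(q;q)_k$ in terms of $D_{m-2}$ and $E_{m-2}$, so I must confirm that the exponent of $q$ produced by the substitution is exactly $q^{4k^2+4mk}$ rather than a shifted variant, which fixes the index at $m$ and hence the Schur subscripts at $m-2$. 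Once the exponents, the prefactor $q^{2m-2m^2}$, and the two infinite products are all verified to dilate consistently, the identity \eqref{relation-a} follows immediately by substituting \eqref{GST} (with $q\to q^4$) into Corollary \ref{cor} and dividing through by $(q^{1+2m};q^2)_{\infty}$.
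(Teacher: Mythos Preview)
Your proposal is correct and follows exactly the paper's own derivation: substitute $(q,y)\to(q^2,q^{1+2m})$ in Corollary~\ref{cor}, recognize the right-hand sum as the left-hand side of \eqref{relation-a} and the left-hand sum as the $q\to q^4$ dilation of \eqref{GST}, then divide by $(q^{1+2m};q^2)_{\infty}$. All of your exponent, prefactor, and product checks are accurate, including $q^{-4\binom{m}{2}}=q^{2m-2m^2}$ and the dilation of the Rogers--Ramanujan products to modulus~$20$.
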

Setting $m=0$ or $1$ in Corollary \ref{cor-a}, we recover the two
known results (cf. \cite[Equations (98) and (96)]{Slater}):
\begin{align}
&
\sum_{k=0}^{\infty}\frac{q^{k^2}}{(q;q)_{2k}}=\frac{(q^{10},q^8,q^2;q^{10})_{\infty}(q^{14},q^6;q^{20})_{\infty}}{(q;q)_{\infty}},
\label{Slater-a}
\\[2mm]
&\sum_{k=0}^{\infty}\frac{q^{k^2+2k}}{(q;q)_{1+2k}}=\frac{(q^{10},q^6,q^4;q^{10})_{\infty}(q^{18},q^2;q^{20})_{\infty}}{(q;q)_{\infty}}.
\label{Slater-b}
\end{align}
It should be pointed that Gu and Prodinger \cite[Theorem 2.6]{Nancy}
gave a one-parameter generalization of \eqref{Slater-a} and
\eqref{Slater-b}, which is different from \eqref{relation-a},
several years ago.

Subsequently, we shall display the following triple-sum
generalization of \eqref{wang-a}.

\begin{thm}\label{thm-d}
Let $x$ and $y$ be complex numbers. Then
\begin{align}\label{eq:wei-d}
\sum_{j,\,k,\,\ell\,\geq0}\frac{(x;q)_j(-x)^{k+2\ell}y^{k+\ell}}{(q;q)_j(q;q)_k(q^2;q^2)_{\ell}}
q^{j+\binom{k}{2}+\binom{j+k+2\ell}{2}}
=(qx,xy;q^2)_{\infty}(-q;q)_{\infty}.
\end{align}
\end{thm}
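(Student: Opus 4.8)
The plan is to use the contour integral method to linearise the coupled Gaussian exponent $q^{\binom{j+k+2\ell}{2}}$ in \eqref{eq:wei-d}. First I would set $n=j+k+2\ell$ and extract $q^{\binom{n}{2}}$, writing the left-hand side of \eqref{eq:wei-d} as $\sum_{n\ge0}q^{\binom{n}{2}}A_n$, where $A_n$ is the finite sum of $\dfrac{(x;q)_j(-x)^{k+2\ell}y^{k+\ell}}{(q;q)_j(q;q)_k(q^2;q^2)_\ell}\,q^{j+\binom{k}{2}}$ over all $j,k,\ell\ge0$ with $j+k+2\ell=n$. Marking $n$ by an auxiliary variable $w$, the generating function $\Phi(w)=\sum_{n\ge0}A_nw^n$ factorises completely over $j$, $k$ and $\ell$; evaluating the three resulting sums by the $q$-binomial theorem (for the $j$-sum), by Euler's identity \eqref{Euler-a} (for the $k$-sum), and by the companion Euler identity in base $q^2$ (for the $\ell$-sum) gives
\[
\Phi(w)=\frac{(xqw,\,xyw;q)_\infty}{(qw;q)_\infty\,(x^2yw^2;q^2)_\infty}.
\]

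Next I would reinsert the weight $q^{\binom{n}{2}}$. Since $\Phi$ is analytic at $w=0$, we have $A_n=0$ for $n<0$, so $\sum_{n\ge0}q^{\binom{n}{2}}A_n=\sum_{n\in\mathbb{Z}}q^{\binom{n}{2}}A_n$ equals the constant term in $w$ of $\Phi(w)\Theta(w)$, where $\Theta(w)=\sum_{n\in\mathbb{Z}}q^{\binom{n}{2}}w^{-n}$. Jacobi's triple product identity evaluates this theta kernel as $\Theta(w)=(q;q)_\infty(-1/w;q)_\infty(-qw;q)_\infty$, which yields the contour integral representation
\[
\text{LHS of \eqref{eq:wei-d}}=\frac{(q;q)_\infty}{2\pi i}\oint \frac{(xqw,\,xyw,\,-qw,\,-1/w;q)_\infty}{(qw;q)_\infty\,(x^2yw^2;q^2)_\infty}\,\frac{dw}{w}.
\]
If desired, writing $(x^2yw^2;q^2)_\infty=(xw\sqrt{y},-xw\sqrt{y};q)_\infty$ brings the whole integrand to a single base $q$.

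Finally I would evaluate this integral by residues. Inside the contour the poles arise from the factors $1/(qw;q)_\infty$ and $1/(x^2yw^2;q^2)_\infty$; collecting the residues produces a bilateral series which, after a second application of Jacobi's triple product identity (equivalently, a ${}_1\psi_1$-type bilateral summation), should collapse to the product $(qx,xy;q^2)_\infty(-q;q)_\infty$ on the right of \eqref{eq:wei-d}. The main obstacle is exactly this residue bookkeeping: keeping track of the two interleaved families of poles (one in base $q$, one in base $q^2$, reflecting the bisection built into the $\ell$-summation) and recognising the resulting bilateral sum as a triple product. A reassuring special case is $x=0$: then $\Phi(w)=1/(qw;q)_\infty$, the integral reduces to $\sum_{n\ge0}q^{\binom{n}{2}}q^n/(q;q)_n$, and \eqref{Euler-a} returns $(-q;q)_\infty$, matching the right-hand side at $x=0$; this both checks the generating function $\Phi$ and indicates how the general evaluation is organised.
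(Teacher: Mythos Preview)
Your integral representation is correct and, after the change of variable $w=-z/x$, coincides exactly with the contour integral the paper writes down. Indeed, under $w\mapsto -z/x$ your integrand
\[
(q;q)_{\infty}\,\frac{(xqw,\,xyw,\,-qw,\,-1/w;q)_\infty}{(qw;q)_\infty\,(x^2yw^2;q^2)_\infty}
\]
becomes precisely $\dfrac{(-yz,-qz,q,qz/x,x/z;q)_\infty}{(yz^2;q^2)_\infty\,(-qz/x;q)_\infty}$, which is the integrand in the paper's \eqref{eq:wei-a3}. So the set-up is the same as the paper's.

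The gap is in the evaluation. You propose to sum residues over the two pole families of $1/(qw;q)_\infty$ and $1/(x^2yw^2;q^2)_\infty$ and then recognise a bilateral ${}_1\psi_1$-type series. This is not how the paper proceeds, and it is also not the natural route: the three interleaved pole families make the direct residue computation awkward, and the result is not a bilateral series in any obvious way. What the paper does instead is recognise the integral as an instance of Rosengren's lemma (Lemma~\ref{lem-b}): with $(a,b,c,t)=(y^{1/2},-y^{1/2},-q,x)$ one has
\[
(q;q)_\infty\oint\frac{(-yz,-qz,qz/x,x/z;q)_\infty}{(yz^2;q^2)_\infty\,(-qz/x;q)_\infty}\,\frac{dz}{2\pi i z}
=(-q,x;q)_\infty\;{}_2\phi_1\!\left[\begin{array}{c}y^{1/2},\,-y^{1/2}\\-q\end{array};q,\,x\right].
\]
Because $(y^{1/2},-y^{1/2};q)_k/(q,-q;q)_k=(y;q^2)_k/(q^2;q^2)_k$, this ${}_2\phi_1$ is really a ${}_1\phi_0$ in base $q^2$ and sums by the $q$-binomial theorem \eqref{Bimo} to $(xy;q^2)_\infty/(x;q^2)_\infty$. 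Using $(x;q)_\infty/(x;q^2)_\infty=(qx;q^2)_\infty$ then gives the right-hand side of \eqref{eq:wei-d}. So the missing ingredient in your outline is not a second Jacobi triple product or a ${}_1\psi_1$ summation, but the recognition that the integral matches the template of Lemma~\ref{lem-b} and hence equals a \emph{unilateral} ${}_2\phi_1$ that collapses by the $q$-binomial theorem in base $q^2$.
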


When $x=1$, Theorem \ref{thm-d} reduces to \eqref{wang-a} thanks to
the relation (cf. \cite[P. 24]{Gasper}):
$(q,-q,-q^2;q^2)_{\infty}=1,$ which will be utilized without
indication elsewhere.

Similarly, we shall give the following two triple-sum
generalizations of \eqref{wang-b}.

\begin{thm}\label{thm-e}
Let$x$ and $y$ be complex numbers. Then
\begin{align}
&\sum_{j,\,k,\,\ell\,\geq0}\frac{(x^2y^2;q^2)_kx^jy^{2j+2\ell}}{(q;q)_j(q^2;q^2)_k(q^2;q^2)_{\ell}}
(-1)^{j+k}q^{(j+k+\ell)(j+k+\ell-1)+\ell^2+k}
\notag\\[1mm]
&\quad=\:\frac{(q;q^2)_{\infty}}{2}\big\{(xy,-y;q)_{\infty}+(-xy,y;q)_{\infty}\big\}.
\label{eq:wei-e1}
\end{align}
\end{thm}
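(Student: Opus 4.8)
The plan is to recognize the symmetric right-hand side as a bisection and then reduce \eqref{eq:wei-e1} to the evaluation of the even-in-$y$ part of a single product. Put $G(y)=(xy,-y;q)_{\infty}$; replacing $y$ by $-y$ gives $G(-y)=(-xy,y;q)_{\infty}$, so that
\[
\tfrac{(q;q^2)_{\infty}}{2}\{(xy,-y;q)_{\infty}+(-xy,y;q)_{\infty}\}=(q;q^2)_{\infty}\cdot\tfrac12\{G(y)+G(-y)\},
\]
which is exactly $(q;q^2)_{\infty}$ times the even part of $G$ in $y$. On the other side, every power of $y$ occurring in the summand of \eqref{eq:wei-e1} is even, because $(x^2y^2;q^2)_k$ and $y^{2j+2\ell}$ both contribute only even powers; hence the left-hand side is already an even function of $y$. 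Consequently it suffices to prove the unbisected statement that the triple sum equals $(q;q^2)_{\infty}\,\mathrm{Even}_y\,G(y)$, after which averaging $G(y)$ with $G(-y)$ recovers the right-hand side of \eqref{eq:wei-e1} verbatim.

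To evaluate $(q;q^2)_{\infty}\,\mathrm{Even}_y\,G(y)$ I would follow the contour-integral route already used for Theorems~\ref{thm-a} and~\ref{thm-d}. Expanding the two factors of $G(y)=(xy;q)_{\infty}(-y;q)_{\infty}$ by Euler's formula \eqref{Euler-a} and the companion $q$-binomial series turns $G(y)$ into a double power series in $y$; extracting the even part (equivalently, applying $\tfrac12\{1+(-1)^{\deg_y}\}$) selects the terms of even total $y$-degree. The scalar factor $(q;q^2)_{\infty}$ is supplied as the theta-quotient that appears when Jacobi's triple product identity is used to package the constant-term/contour extraction, exactly as in the earlier proofs, so no new analytic input is needed at this stage.

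The decisive and most delicate step is to reorganize this even part into the precise triple-sum shape of \eqref{eq:wei-e1}. Here the key device is the quadratic collapse $(xy;q)_k(-xy;q)_k=(x^2y^2;q^2)_k$, which converts the pairs of base-$q$ factors produced by the even selection into the base-$q^2$ factor $(x^2y^2;q^2)_k$ and simultaneously forces the base change $q\to q^2$ in two of the three indices, yielding the denominators $(q^2;q^2)_k(q^2;q^2)_{\ell}$. One must then check that the accumulated $q$-powers assemble into the quadratic exponent $(j+k+\ell)(j+k+\ell-1)+\ell^2+k=2\binom{j+k+\ell}{2}+\ell^2+k$ and that the signs combine to $(-1)^{j+k}$. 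I expect this term-by-term matching of the quadratic form and the signs after the rescaling to be the main obstacle, together with the routine justification --- valid for $|q|<1$ --- of interchanging the contour integration with the absolutely convergent multiple series. Once the even part of $G$ is identified with the left-hand side of \eqref{eq:wei-e1}, the reduction of the first paragraph completes the proof.
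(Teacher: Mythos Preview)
Your bisection observation is exactly the paper's starting point: summing the $q$-binomial theorem \eqref{Bimo} and its $t\to-t$ companion is precisely taking the even part, and this gives the closed form
\[
{_{2}\phi_{1}}\!\left[\begin{array}{c}a,aq\\ q\end{array};q^2,t^2\right]
=\tfrac12\Big\{\tfrac{(at;q)_\infty}{(t;q)_\infty}+\tfrac{(-at;q)_\infty}{(-t;q)_\infty}\Big\},
\]
which with $(a,t)\to(x,y)$ produces the right-hand side of \eqref{eq:wei-e1} after clearing denominators. But the paper does \emph{not} now expand $G(y)$ and try to massage the resulting double series into the triple sum. Instead it feeds the ${}_2\phi_1$ above into Rosengren's integral representation (Lemma~\ref{lem-b}) with $(q,a,b,c,t)\to(q^2,x,qx,q,y^2)$; this yields a specific integrand whose four pieces expand via \eqref{Euler-a}, \eqref{Euler-b}, \eqref{Jacobi}, and \eqref{Bimo} to give exactly the indices $j,k,\ell$ and the exponent $(j{+}k{+}\ell)(j{+}k{+}\ell{-}1)+\ell^2+k$. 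Crucially, the factor $(x^2y^2;q^2)_k$ arises not from your proposed collapse $(xy;q)_k(-xy;q)_k=(x^2y^2;q^2)_k$ but from the $q$-binomial expansion of the ratio $(qx^2z;q^2)_\infty/(qz/y^2;q^2)_\infty$ sitting inside the integrand.

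This is the gap in your plan. Expanding $(xy;q)_\infty(-y;q)_\infty$ by Euler produces a \emph{double} sum with denominators $(q;q)_m(q;q)_n$ and no rising factorials in the numerator at all, so the identity $(xy;q)_k(-xy;q)_k=(x^2y^2;q^2)_k$ never has a chance to act; there is nothing in your expansion for it to collapse. Your appeal to ``the contour-integral route already used'' is too vague: you would need to say \emph{which} integrand realizes $(q;q^2)_\infty\,\mathrm{Even}_yG(y)$ as its constant term, and the choice that works is exactly the one dictated by Lemma~\ref{lem-b} applied to the ${}_2\phi_1$ above. Without identifying that ${}_2\phi_1$ and invoking Lemma~\ref{lem-b}, the reorganization you describe in your third paragraph has no concrete mechanism, and the ``key device'' you single out is not the one that actually does the work.
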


\begin{thm}\label{thm-f}
Let $x$ and $y$ be complex numbers. Then
\begin{align}
&\sum_{j,\,k,\,\ell\,\geq0}\frac{(x^2y^2;q^2)_kx^jy^{2j+2\ell}}{(q;q)_j(q^2;q^2)_k(q^2;q^2)_{\ell}}
(-1)^{j+k}q^{(j+k+\ell)(j+k+\ell-1)+\ell^2+3k}
\notag\\[1mm]
&\quad=\:\frac{(q;q^2)_{\infty}}{2(y/q-xy)}\big\{(xy,-y/q;q)_{\infty}-(-xy,y/q;q)_{\infty}\big\}.
\label{eq:wei-f}
\end{align}
\end{thm}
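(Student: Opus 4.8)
The plan is to treat \eqref{eq:wei-e1} and \eqref{eq:wei-f} as the two specializations $t=q$ and $t=q^3$ of the single series
\[
\Psi(t)=\sum_{j,\,k,\,\ell\,\ge0}\frac{(x^2y^2;q^2)_k\,x^jy^{2j+2\ell}}{(q;q)_j(q^2;q^2)_k(q^2;q^2)_\ell}(-1)^{j+k}q^{(j+k+\ell)(j+k+\ell-1)+\ell^2}\,t^k,
\]
so that the left-hand side of \eqref{eq:wei-f} is $\Psi(q^3)$, while $\Psi(q)$ is already evaluated by Theorem \ref{thm-e}. Because the Gaussian factor $q^{(j+k+\ell)(j+k+\ell-1)}=(q^2)^{\binom{j+k+\ell}{2}}$ couples the three indices, I would first linearize it by the contour integral method: with $N=j+k+\ell$ and Jacobi's triple product identity in the shape $\sum_{M\in\mathbb{Z}}(q^2)^{\binom{M}{2}}z^M=(q^2;q^2)_\infty(-z;q^2)_\infty(-q^2/z;q^2)_\infty$, I would replace $q^{N(N-1)}$ by the constant term in $z$ of $z^{-N}$ times this theta function, after which the three summations decouple.

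Next I would perform the inner sums in closed form: the $j$-sum by Euler's formula $\sum_j w^j/(q;q)_j=1/(w;q)_\infty$ (cf. \cite[Appendix (II.1)]{Gasper}) with $w=-xy^2/z$; the $\ell$-sum by \eqref{Euler-a} in base $q^2$, using $q^{\ell^2}=(q^2)^{\binom{\ell}{2}}q^\ell$ to obtain $(-qy^2/z;q^2)_\infty$; and the $k$-sum by the $q$-binomial theorem (cf. \cite[Appendix (II.3)]{Gasper}) in base $q^2$ to obtain $(-x^2y^2t/z;q^2)_\infty/(-t/z;q^2)_\infty$. Collecting these, $\Psi(t)$ equals $(q^2;q^2)_\infty$ times the constant term in $z$ of
\[
(-z;q^2)_\infty(-q^2/z;q^2)_\infty\,\frac{(-qy^2/z;q^2)_\infty\,(-x^2y^2t/z;q^2)_\infty}{(-xy^2/z;q)_\infty\,(-t/z;q^2)_\infty}.
\]
To bring every factor to the common base $q^2$, I would split $(-xy^2/z;q)_\infty=(-xy^2/z;q^2)_\infty(-qxy^2/z;q^2)_\infty$.

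Finally I would evaluate this constant term by summing residues at the poles supplied by the three denominator factors, namely $z=-xy^2q^{2i}$, $z=-xy^2q^{2i+1}$ and $z=-tq^{2i}$ for $i\ge0$; this splitting into pole families is exactly the bisection that produces the $\pm$ structure, and each family sums by Jacobi's triple product identity to an infinite product. For $t=q$ the last family sits at $z=-q^{2i+1}$ and reassembles the symmetric combination $(xy,-y;q)_\infty+(-xy,y;q)_\infty$ of \eqref{eq:wei-e1}; for $t=q^3$ it is shifted to $z=-q^{2i+3}$, and I expect this shift to replace $-y$ by $-y/q$, to turn the sum into the difference $(xy,-y/q;q)_\infty-(-xy,y/q;q)_\infty$, and to supply the prefactor $1/(y/q-xy)$, giving \eqref{eq:wei-f}. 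The main obstacle is precisely this evaluation: handling the residue sums with the mixed bases $q$ and $q^2$ and confirming that the $t=q^3$ specialization reproduces the shifted antisymmetric product together with its prefactor. Since, in the integrand, $\Psi(q^3)$ differs from $\Psi(q)$ only by the rational factor $(1+q/z)/(1+x^2y^2q/z)$, an equivalent and perhaps cleaner route is to track how this single factor transforms the sum in \eqref{eq:wei-e1} into the difference in \eqref{eq:wei-f}.
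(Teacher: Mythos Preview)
Your contour-integral setup is correct and is in fact the paper's own route. After the change of variable $z\mapsto -y^2/(q^2z)$ your integrand becomes exactly
\[
\frac{(q^5x^2z,\,q^3z,\,q^2,\,q^4z/y^2,\,y^2/q^2z;q^2)_\infty}{(q^2xz;q)_\infty\,(q^5z/y^2;q^2)_\infty},
\]
which is the integrand the paper writes down. So up to the point where an integral must be evaluated, you and the paper are doing the same computation in opposite directions.

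The gap is the evaluation you flag as ``the main obstacle''. In your variable the three pole families $z=-xy^2q^{2i}$, $z=-xy^2q^{2i+1}$, $z=-tq^{2i}$ all accumulate at the essential singularity $z=0$, so a literal ``sum of residues inside'' is not well-posed; you would first have to invert $z$, after which the integral has exactly the shape of Lemma~\ref{lem-b}. That is what the paper does: rather than computing residues by hand, it invokes Lemma~\ref{lem-b} with $(q,a,b,c,t)\to(q^2,q^2x,q^3x,q^3,y^2/q^2)$ to recognize the integral as
\[
{}_2\phi_1\!\left[\begin{array}{c}q^2x,\,q^3x\\ q^3\end{array};q^2,\ \frac{y^2}{q^2}\right],
\]
and this ${}_2\phi_1$ is simply the odd part of the $q$-binomial theorem: differencing \eqref{Bimo} with its $t\mapsto -t$ companion gives
\[
{}_2\phi_1\!\left[\begin{array}{c}aq,\,aq^2\\ q^3\end{array};q^2,\ t^2\right]
=\frac{1-q}{2(1-a)t}\bigg\{\frac{(at;q)_\infty}{(t;q)_\infty}-\frac{(-at;q)_\infty}{(-t;q)_\infty}\bigg\},
\]
which at $(a,t)=(qx,y/q)$ is precisely the right-hand side of \eqref{eq:wei-f}. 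Your alternative idea of passing from $\Psi(q)$ to $\Psi(q^3)$ via the factor $(1+q/z)/(1+x^2y^2q/z)$ amounts to the contiguous relation between the even and odd ${}_2\phi_1$'s and would work, but it is no shorter than quoting Lemma~\ref{lem-b} and the odd bisection directly.
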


When $xy=1$, Theorems \ref{thm-e} and \ref{thm-f} both become
\eqref{wang-b}. From the two theorems, we can also deduce some new
multidimensional Rogers--Ramanujan type identities.

Taking $(x,y)\to(-xq^m,q^{-m})$ in Theorem \ref{thm-e}, there is the
following formula.

\begin{cor}\label{cor-b}
Let $x$ be a complex number and let $m$ be a nonnegative integer.
Then
\begin{align}
&\sum_{j,\,k,\,\ell\,\geq0}\frac{(x^2;q^2)_k\,x^j(-1)^{k}}{(q;q)_j(q^2;q^2)_k(q^2;q^2)_{\ell}}
q^{(j+k+\ell)(j+k+\ell-1)+\ell^2+k-m(j+2\ell)}
\notag\\[1mm]
&\quad=(-q^{-m};q)_m(-x;q)_{\infty}.
 \label{eq:wei-g}
\end{align}
\end{cor}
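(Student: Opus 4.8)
The plan is to derive \eqref{eq:wei-g} as a specialization of Theorem \ref{thm-e}, obtained by setting $(x,y)\to(-xq^m,q^{-m})$ and then simplifying each side. Since the quadratic term $(j+k+\ell)(j+k+\ell-1)+\ell^2$ forces convergence of the series in \eqref{eq:wei-e1} for all complex $x,y$, the substitution is legitimate even at the boundary value $y=q^{-m}$.

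On the left-hand side the substitution is routine bookkeeping: the identity $x^2y^2=q^{2m}x^2q^{-2m}=x^2$ turns $(x^2y^2;q^2)_k$ into $(x^2;q^2)_k$; the monomial becomes $x^jy^{2j+2\ell}=(-1)^jx^jq^{mj-m(2j+2\ell)}=(-1)^jx^jq^{-m(j+2\ell)}$; and the resulting factor $(-1)^j$ merges with the existing $(-1)^{j+k}$ to leave $(-1)^k$. Absorbing $q^{-m(j+2\ell)}$ into the exponent reproduces exactly the left-hand side of \eqref{eq:wei-g}.

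The substance of the argument lies on the right-hand side. Under the substitution, $(xy,-y;q)_\infty\to(-x,-q^{-m};q)_\infty$ and $(-xy,y;q)_\infty\to(x,q^{-m};q)_\infty$. The crucial observation is that for each nonnegative integer $m$ the product $(q^{-m};q)_\infty$ contains the vanishing factor $1-q^{-m}q^{m}=0$, so the second bracketed term is identically zero and the right-hand side collapses to $\tfrac{(q;q^2)_\infty}{2}(-x;q)_\infty(-q^{-m};q)_\infty$. It then suffices to prove $\tfrac{(q;q^2)_\infty}{2}(-q^{-m};q)_\infty=(-q^{-m};q)_m$. I would split off the $m$ factors of $(-q^{-m};q)_\infty$ carrying negative exponents to write $(-q^{-m};q)_\infty=(-q^{-m};q)_m\,(-1;q)_\infty=2(-q^{-m};q)_m(-q;q)_\infty$, and then invoke $(q,-q,-q^2;q^2)_\infty=1$ in the form $(q;q^2)_\infty(-q;q)_\infty=1$ to collapse the constant, since $\tfrac{(q;q^2)_\infty}{2}\cdot2(-q;q)_\infty=1$. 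This leaves precisely $(-q^{-m};q)_m(-x;q)_\infty$, as claimed.

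The step requiring the most care is thus the right-hand side: recognizing that one summand drops out because $(q^{-m};q)_\infty=0$, and then carrying out the factorization of $(-q^{-m};q)_\infty$ correctly alongside the cancellation supplied by $(q;q^2)_\infty(-q;q)_\infty=1$. The remaining manipulations are purely mechanical.
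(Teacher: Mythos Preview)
Your proof is correct and follows exactly the route indicated by the paper, namely the substitution $(x,y)\to(-xq^m,q^{-m})$ in Theorem~\ref{thm-e}. The paper states only that one-line specialization and leaves the simplification to the reader; you have filled in precisely the details that make it work, including the vanishing of $(q^{-m};q)_\infty$, the factorization $(-q^{-m};q)_\infty=2(-q^{-m};q)_m(-q;q)_\infty$, and the cancellation $(q;q^2)_\infty(-q;q)_\infty=1$.
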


Letting $(x,y)\to(1,x^{\frac{1}{2}})$, $(q,x^{\frac{1}{2}}/q)$ or
$(1/q,qx^{\frac{1}{2}})$ in Theorem \ref{thm-e},
 we find the following  three conclusions.

\begin{cor}\label{cor-c}
Let $x$ be a complex number. Then
\begin{align*}
&\sum_{j,\,k,\,\ell\,\geq0}\frac{(x;q^2)_k\,x^{j+\ell}(-1)^{j+k}}{(q;q)_j(q^2;q^2)_k(q^2;q^2)_{\ell}}
q^{(j+k+\ell)(j+k+\ell-1)+\ell^2+k}=\:(q,x;q^2)_{\infty},
\\[1mm]
&\sum_{j,\,k,\,\ell\,\geq0}\frac{(x;q^2)_k\,x^{j+\ell}(-1)^{j+k}}{(q;q)_j(q^2;q^2)_k(q^2;q^2)_{\ell}}
q^{(j+k+\ell)(j+k+\ell-1)+\ell^2-j+k-2\ell}=\:(q,x;q^2)_{\infty},
\end{align*}
\begin{align*}
&\sum_{j,\,k,\,\ell\,\geq0}\frac{(x;q^2)_k\,x^{j+\ell}(-1)^{j+k}}{(q;q)_j(q^2;q^2)_k(q^2;q^2)_{\ell}}
q^{(j+k+\ell)(j+k+\ell-1)+\ell^2+j+k+2\ell}=\:(q,q^2x;q^2)_{\infty}.
\end{align*}
\end{cor}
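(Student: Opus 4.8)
The plan is to obtain all three identities as direct specializations of Theorem~\ref{thm-e}, so for each prescribed pair $(x,y)$ it suffices to check separately that the left-hand side collapses to the stated triple sum and that the right-hand side collapses to the stated infinite product.

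For the left-hand side the work is purely bookkeeping. Writing the new free variable as $x$, each substitution has the feature that $x^2y^2$ reduces to $x$, so the factor $(x^2y^2;q^2)_k$ becomes $(x;q^2)_k$ in every case; meanwhile $x^jy^{2j+2\ell}$ reduces to $x^{j+\ell}$ times a pure power of $q$. I would simply absorb that extra power of $q$ into the exponent $(j+k+\ell)(j+k+\ell-1)+\ell^2+k$: for $(x,y)\to(1,x^{1/2})$ there is no extra power and the exponent is unchanged; for $(x,y)\to(q,x^{1/2}/q)$ the factor $q^{-j-2\ell}$ shifts it to $(j+k+\ell)(j+k+\ell-1)+\ell^2-j+k-2\ell$; and for $(x,y)\to(1/q,qx^{1/2})$ the factor $q^{\,j+2\ell}$ shifts it to $(j+k+\ell)(j+k+\ell-1)+\ell^2+j+k+2\ell$. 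These three exponents are exactly the ones recorded in the corollary.

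The right-hand side is where the only genuine manipulation lies, and the single tool I would use is the bisection identity $(z;q)_{\infty}(-z;q)_{\infty}=(z^2;q^2)_{\infty}$. After substitution the bracket $\{(xy,-y;q)_{\infty}+(-xy,y;q)_{\infty}\}$ always has $xy$ specialize to $x^{1/2}$ and $-xy$ to $-x^{1/2}$, so both summands are built from the pair $(x^{1/2};q)_{\infty}$, $(-x^{1/2};q)_{\infty}$, whose product is $(x;q^2)_{\infty}$. In the first case the two summands coincide and combine immediately, giving $(q;q^2)_{\infty}(x;q^2)_{\infty}=(q,x;q^2)_{\infty}$. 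In the remaining two cases the $-y$ and $y$ arguments carry an extra factor of $q^{\mp1}$, so before applying the bisection identity I would peel off a single factor using the shift relations $(z/q;q)_{\infty}=(1-z/q)(z;q)_{\infty}$ and $(qz;q)_{\infty}=(z;q)_{\infty}/(1-z)$. For $(q,x^{1/2}/q)$ this produces the symmetric correction $(1\pm x^{1/2}/q)$ in the two summands, which cancels upon addition and again leaves $(q,x;q^2)_{\infty}$; for $(1/q,qx^{1/2})$ the correction is $1/(1\pm x^{1/2})$, whose sum over the two terms is $2/(1-x)$, and I would finish by absorbing the factor $(1-x)^{-1}$ through $(x;q^2)_{\infty}=(1-x)(q^2x;q^2)_{\infty}$ to reach $(q,q^2x;q^2)_{\infty}$.

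There is no serious obstacle here: the result is a clean specialization argument. The only point demanding care is the asymmetry in the last two cases, where the single-factor corrections from the shift relations must be combined correctly---trivially when they are additive and symmetric (second case), and with the extra denominator-absorption step when they appear as reciprocals (third case).
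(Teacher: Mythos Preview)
Your proposal is correct and follows exactly the route indicated in the paper: the three identities are obtained from Theorem~\ref{thm-e} by the substitutions $(x,y)\to(1,x^{1/2})$, $(q,x^{1/2}/q)$, $(1/q,qx^{1/2})$, and your bookkeeping on both sides (including the shift corrections and the absorption of $(1-x)^{-1}$ in the third case) checks out. The paper itself gives no further details beyond naming these three specializations, so your write-up is in fact more explicit than the original.
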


Taking $(x,y)\to(-xq^{m-1},q^{1-m})$ in Theorem \ref{thm-f}, there
holds the following formula.

\begin{cor}\label{cor-d}
Let $x$ be a complex number and let $m$ be a nonnegative integer.
Then
\begin{align}
&\sum_{j,\,k,\,\ell\,\geq0}\frac{(x^2;q^2)_k\,x^j(-1)^{k}}{(q;q)_j(q^2;q^2)_k(q^2;q^2)_{\ell}}
q^{(j+k+\ell)(j+k+\ell-1)+\ell^2+3k-(m-1)(j+2\ell)}
\notag\\
&\quad=\frac{(-q^{-m};q)_m(-x;q)_{\infty}}{q^{-m}+x}.
 \label{eq:wei-h}
\end{align}
\end{cor}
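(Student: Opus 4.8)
The plan is to derive Corollary~\ref{cor-d} directly from Theorem~\ref{thm-f} by the indicated substitution $(x,y)\to(-xq^{m-1},q^{1-m})$, so no new machinery is required; the entire content lies in the bookkeeping of the substitution in the summand and in collapsing the difference of infinite products on the right-hand side of \eqref{eq:wei-f} to the single product appearing in \eqref{eq:wei-h}.

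First I would substitute into the summand of \eqref{eq:wei-f} and verify that it reproduces the summand of \eqref{eq:wei-h}. The factor $(x^2y^2;q^2)_k$ becomes $(x^2;q^2)_k$ since $x^2y^2\to x^2q^{2(m-1)}q^{2(1-m)}=x^2$; writing $x^j\to(-1)^jx^jq^{(m-1)j}$, the sign $(-1)^j$ combines with the original $(-1)^{j+k}$ to give the $(-1)^k$ of the corollary; and the $q$-powers carried by $x^j$ and by $y^{2j+2\ell}$ add up to $(m-1)j+2(1-m)(j+\ell)=-(m-1)(j+2\ell)$. Hence the exponent of $q$ becomes $(j+k+\ell)(j+k+\ell-1)+\ell^2+3k-(m-1)(j+2\ell)$, with the three denominators $(q;q)_j(q^2;q^2)_k(q^2;q^2)_\ell$ left untouched, matching \eqref{eq:wei-h} exactly.

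Next I would handle the right-hand side. Under the same substitution one finds $xy\to -x$ and $y/q\to q^{-m}$, so the prefactor denominator transforms as $y/q-xy\to q^{-m}+x$ and the bracketed difference becomes $(-x;q)_\infty(-q^{-m};q)_\infty-(x;q)_\infty(q^{-m};q)_\infty$. The decisive observation is that, because $m$ is a nonnegative integer, the product $(q^{-m};q)_\infty$ contains the vanishing factor $1-q^{m-m}=0$; thus the second term disappears, and since the denominator $q^{-m}+x$ is (generically) nonzero no indeterminacy is introduced. What remains is to show $(q;q^2)_\infty(-q^{-m};q)_\infty/2=(-q^{-m};q)_m$.

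This last reduction is the only mildly delicate point, and it is short. I would factor $(-q^{-m};q)_\infty=(-q^{-m};q)_m\,(-1;q)_\infty$, use $(-1;q)_\infty=2(-q;q)_\infty$, and invoke the standard identity $(q;q^2)_\infty(-q;q)_\infty=1$; these combine to give $(q;q^2)_\infty(-q^{-m};q)_\infty/2=(-q^{-m};q)_m$, so the right-hand side of \eqref{eq:wei-f} collapses to $\dfrac{(-q^{-m};q)_m(-x;q)_\infty}{q^{-m}+x}$. Since Theorem~\ref{thm-f} is valid for all complex $x,y$ for which its right-hand side is defined, the specialization is legitimate and the proof is complete. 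I do not anticipate any genuine obstacle: the key is simply recognizing that $(q^{-m};q)_\infty=0$ for nonnegative integer $m$, which forces the difference of two products into a single product.
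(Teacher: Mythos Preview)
Your proposal is correct and follows exactly the route the paper indicates: specialize $(x,y)\to(-xq^{m-1},q^{1-m})$ in Theorem~\ref{thm-f}, observe that $(q^{-m};q)_\infty=0$ kills one of the two products, and then simplify $(q;q^2)_\infty(-q^{-m};q)_\infty/2$ to $(-q^{-m};q)_m$ via $(-1;q)_\infty=2(-q;q)_\infty$ and $(q;q^2)_\infty(-q;q)_\infty=1$. Your write-up merely makes explicit the bookkeeping that the paper leaves implicit in its one-line derivation.
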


Letting $(x,y)\to(1,x^{\frac{1}{2}})$ or
$(1/q^2,q^2x^{\frac{1}{2}})$ in Theorem \ref{thm-f}, we derive the
following two conclusions.

\begin{cor}\label{cor-e}
Let $x$ be a complex number. Then
\begin{align*}
&\sum_{j,\,k,\,\ell\,\geq0}\frac{(x;q^2)_k\,x^{j+\ell}(-1)^{j+k}}{(q;q)_j(q^2;q^2)_k(q^2;q^2)_{\ell}}
q^{(j+k+\ell)(j+k+\ell-1)+\ell^2+3k}=(q^3,x;q^2)_{\infty},
\\[1mm]
&\sum_{j,\,k,\,\ell\,\geq0}\frac{(x;q^2)_k\,x^{j+\ell}(-1)^{j+k}}{(q;q)_j(q^2;q^2)_k(q^2;q^2)_{\ell}}
q^{(j+k+\ell)(j+k+\ell-1)+\ell^2+2j+3k+4\ell}=(q^3,q^2x;q^2)_{\infty}.
\end{align*}
\end{cor}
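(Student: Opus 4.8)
The plan is to derive both identities as specializations of Theorem \ref{thm-f}, where the genuine work lies entirely in simplifying the right-hand side into a clean infinite product. For the first identity I would set $(x,y)\to(1,x^{1/2})$. On the left-hand side this sends $(x^2y^2;q^2)_k$ to $(x;q^2)_k$ and $x^jy^{2j+2\ell}$ to $x^{j+\ell}$, while leaving the exponent of $q$ untouched, so the summand instantly matches the claimed form. For the second identity I would instead take $(x,y)\to(1/q^2,q^2x^{1/2})$; here $x^2y^2=x$ again, but now $x^j=q^{-2j}$ and $y^{2j+2\ell}=q^{4j+4\ell}x^{j+\ell}$ contribute an extra $2j+4\ell$ to the exponent of $q$, producing precisely the exponent $(j+k+\ell)(j+k+\ell-1)+\ell^2+2j+3k+4\ell$ recorded in the statement.

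The crux is the right-hand side. After the first substitution the bracket in \eqref{eq:wei-f} reads $(x^{1/2},-x^{1/2}/q;q)_\infty-(-x^{1/2},x^{1/2}/q;q)_\infty$ and the prefactor becomes $(q;q^2)_\infty\,q/\{2x^{1/2}(1-q)\}$. I would clear the shifted arguments using $(a/q;q)_\infty=(1-a/q)(a;q)_\infty$, pulling the common factor $(x^{1/2};q)_\infty(-x^{1/2};q)_\infty$ out of both terms of the bracket; what survives is $(1+x^{1/2}/q)-(1-x^{1/2}/q)=2x^{1/2}/q$. The product rule $(a;q)_\infty(-a;q)_\infty=(a^2;q^2)_\infty$ then collapses the common factor to $(x;q^2)_\infty$. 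Assembling everything, the powers of $x^{1/2}$ and $q$ cancel against the prefactor, and $(q;q^2)_\infty=(1-q)(q^3;q^2)_\infty$ turns the remaining $(q;q^2)_\infty/(1-q)$ into $(q^3;q^2)_\infty$, giving $(q^3,x;q^2)_\infty$.

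For the second substitution the arguments become $xy=x^{1/2}$ and $y/q=qx^{1/2}$, so the relevant shift is now $(aq;q)_\infty=(a;q)_\infty/(1-a)$; extracting the same common factor $(x^{1/2};q)_\infty(-x^{1/2};q)_\infty$ reduces the bracket to $1/(1+x^{1/2})-1/(1-x^{1/2})=-2x^{1/2}/(1-x)$. The product rule again supplies $(x;q^2)_\infty$, and after cancellation the right-hand side equals $(q;q^2)_\infty(x;q^2)_\infty/\{(1-q)(1-x)\}$; applying both $(q;q^2)_\infty/(1-q)=(q^3;q^2)_\infty$ and $(x;q^2)_\infty/(1-x)=(q^2x;q^2)_\infty$ yields $(q^3,q^2x;q^2)_\infty$.

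The only point requiring care is the bookkeeping around the prefactor $1/(y/q-xy)$: one must track the single power of $x^{1/2}$ produced by the difference of the two products and confirm that it cancels the $x^{1/2}$ sitting in the denominator, leaving a residual factor of $1-q$ (respectively $(1-q)(1-x)$) that is exactly what is needed to renormalize $(q;q^2)_\infty$ (and $(x;q^2)_\infty$) into the shifted products appearing in the answer. Once the bracket is rewritten with $(x^{1/2};q)_\infty(-x^{1/2};q)_\infty$ factored out, this cancellation is transparent and the remaining steps are routine manipulations of $q$-products.
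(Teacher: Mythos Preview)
Your proposal is correct and follows exactly the route indicated in the paper, namely the specializations $(x,y)\to(1,x^{1/2})$ and $(x,y)\to(1/q^2,q^2x^{1/2})$ in Theorem~\ref{thm-f}; the paper merely records these substitutions without spelling out the product simplification, whereas you have carried it through in full and the algebra checks out. The only minor remark is that in the second case the prefactor $1/(y/q-xy)=1/\{x^{1/2}(q-1)\}$ carries a sign which cancels against the sign of $-2x^{1/2}/(1-x)$ coming from the bracket---you absorbed this silently into ``after cancellation,'' and it would be worth making that explicit.
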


The rest of the paper is arranged as follows. In terms of the
contour integral method, we shall prove Theorems \ref{thm-a} in
Section 2. Similarly, the proof of Theorems \ref{thm-d}-\ref{thm-f}
will be displayed in Section 3. According to the bisection method,
we shall also establish several new multidimensional
Rogers--Ramanujan type identities in Section 4.

\section{Proof of Theorems \ref{thm-a}}

Let $f(z)$ represent the coming series
$$f(z)=\sum_{k=-\infty}^{\infty}a(k)z^k.$$ We employ $[z^k]f(z)$
to stand for the coefficient of $z^k$. Then it is well-known that
\begin{align*}
\oint f(z)\frac{dz}{2\pi iz}=[z^0]f(z),
\end{align*}
where the contour is positively oriented and simple closed around
the origin.

For the aim to prove Theorem \ref{thm-a}, we need the following
lemma (cf. \cite[Equation (4.10.6)]{Gasper}).

\begin{lem}\label{lem-a}
Assume that
\begin{align*}
P(z)=\frac{(a_1z,\ldots,a_Az,b_1/z,\ldots,b_B/z;q)_{\infty}}{(c_1z,\ldots,c_Az,d_1/z,\ldots,d_D/z;q)_{\infty}}
\end{align*}
has only simple poles and $|a_1\cdots a_A/c_1\cdots c_A|<1$. Then
\begin{align*}
\oint P(z)\frac{dz}{2\pi iz}
&=\frac{(b_1c_1,\ldots,b_Bc_1,a_1/c_1,\ldots,a_A/c_1;q)_{\infty}}{(q,d_1c_1,\ldots,d_Dc_1,c_2/c_1,\ldots,c_A/c_1;q)_{\infty}}
\\[1mm]
&\quad\times\sum_{k=0}^{\infty}\frac{(d_1c_1,\ldots,d_Dc_1,qc_1/a_1,\ldots,qc_1/a_A;q)_{k}}{(q,b_1c_1,\ldots,b_Bc_1,qc_1/c_2,\ldots,qc_1/c_A;q)_{k}}
\bigg(\frac{a_1\cdots a_A}{c_1\cdots c_A}\bigg)^k
\\[1mm]
&\quad+\:idem(c_1;c_2,\ldots,c_A),
\end{align*}
where the integration is over a positively oriented unit circle such
that the origin and poles of $1/(d_1/z,\ldots,d_D/z;q)_{\infty}$ lie
inside the contour and the poles of
$1/(c_1z,\ldots,c_Az;q)_{\infty}$ lie outside the contour. Here the
notation $idem(c_1;c_2,\ldots,c_A)$ after an expression means the
sum of the $A$ expressions got from the front expression by
interchanging $c_1$ with $c_k$, where $k=2,3,\ldots,c_A$.
\end{lem}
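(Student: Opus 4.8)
The plan is to evaluate the contour integral by the residue theorem, organizing the computation around the poles contributed by the factors $1/(c_iz;q)_\infty$, since it is exactly those data that appear on the claimed right-hand side. Under the stated conditions $P(z)/z$ is meromorphic on $\mathbb{C}\setminus\{0\}$ with only simple poles: inside the unit circle they sit at $z=d_\ell q^n$ ($n\ge0$) and accumulate at the origin, while outside they sit at $z=q^{-n}/c_i$ ($n\ge0$, $1\le i\le A$) and march off to infinity. First I would record the local data of $P$ at an outside pole by writing $(c_iz;q)_\infty=(c_iz;q)_n(1-c_iq^nz)(c_iq^{n+1}z;q)_\infty$, which exhibits $z=q^{-n}/c_i$ as a simple zero of the denominator with $\tfrac{d}{dz}(1-c_iq^nz)=-c_iq^n$.

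Next I would pass from the unit circle to a family of circles $|z|=R_N$ with $R_N\to\infty$ chosen to avoid every pole. Applying the residue theorem on each annulus $1\le|z|\le R_N$ gives
\[
\oint_{|z|=1}\!P(z)\frac{dz}{2\pi iz}=\oint_{|z|=R_N}\!P(z)\frac{dz}{2\pi iz}-\sum_{\text{poles in }1<|z|<R_N}\operatorname{Res}_{z}\Big[\frac{P(z)}{z}\Big].
\]
The crucial analytic step is that the large-circle integral tends to $0$ as $N\to\infty$. This follows from the quasi-periodicity $P(qz)/P(z)\to c_1\cdots c_A/(a_1\cdots a_A)$ as $z\to\infty$ (the $b_j/z$ and $d_\ell/z$ factors contribute $1$ in the limit), which together with the extra factor $1/z$ forces $P(z)/z$ to decay along the $|z|=R_N$ precisely because $|a_1\cdots a_A/(c_1\cdots c_A)|<1$. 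Hence the integral equals $-\sum_{i,\,n}\operatorname{Res}_{z=q^{-n}/c_i}[P(z)/z]$, the sum being absolutely convergent for the same reason.

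It then remains to compute these residues. For fixed $i$ I would evaluate every surviving factor at $z=q^{-n}/c_i$, so that $c_iz=q^{-n}$, $c_{i'}z=(c_{i'}/c_i)q^{-n}$, $a_jz=(a_j/c_i)q^{-n}$, $b_j/z=b_jc_iq^n$ and $d_\ell/z=d_\ell c_iq^n$, then divide by $-c_iq^n$ and clear the explicit $z=q^{-n}/c_i$ coming from $1/z$. Summing over $n\ge0$ produces a series in $n$ whose terms I would normalize using the reversal identities for $q$-shifted factorials with negative-power arguments, such as $(q^{-n};q)_n=(-1)^nq^{-\binom{n+1}{2}}(q;q)_n$ and $(xq^{-n};q)_\infty=(xq^{-n};q)_n(x;q)_\infty$. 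After this rewriting the $n=0$ factor detaches as the announced prefactor $(b_1c_i,\ldots,a_A/c_i;q)_\infty/(q,d_1c_i,\ldots,c_{i'}/c_i;q)_\infty$, and the ratio of consecutive terms collapses the remaining sum into the stated basic hypergeometric series with argument $(a_1\cdots a_A/c_1\cdots c_A)^k$. The contribution of $c_1$ is the displayed term, and those of $c_2,\ldots,c_A$ are exactly $idem(c_1;c_2,\ldots,c_A)$.

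I expect the main obstacle to be the bookkeeping in this last step: tracking the many $q$-Pochhammer symbols with shifted and negated arguments while checking that the ratio of consecutive residues is the correct rational function of $q^k$ is routine but error-prone, and it is here that the simple-pole hypothesis is used, guaranteeing first-order residues and that the points $q^{-n}/c_i$ are pairwise distinct and disjoint from the $d_\ell q^n$. The only genuinely analytic point is the vanishing of the large-circle integral, where the modulus condition $|a_1\cdots a_A/(c_1\cdots c_A)|<1$ is indispensable.
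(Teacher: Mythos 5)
Your proposal is correct and essentially coincides with the standard proof of this result, which the paper does not reprove but simply cites as Gasper--Rahman, Equation (4.10.6): one pushes the contour out through circles $|z|=R_N\to\infty$ chosen to avoid poles, uses the quasi-periodicity of $P$ together with $|a_1\cdots a_A/(c_1\cdots c_A)|<1$ to show the large-circle integrals vanish, and sums the absolutely convergent residues at the simple poles $z=q^{-n}/c_i$, which after the reversal identities $(q^{-n};q)_n=(-1)^nq^{-\binom{n+1}{2}}(q;q)_n$ and $(xq^{-n};q)_\infty=(xq^{-n};q)_n(x;q)_\infty$ assemble into the $A$ displayed series. The points you defer---the sign cancellation between the clockwise orientation of the outside residues, the derivative factor $-c_iq^n$, and the $(-1)^n$ from $(q^{-n};q)_n$, as well as the term-ratio check producing the argument $(a_1\cdots a_A/c_1\cdots c_A)^k$---are indeed routine and come out exactly as stated, with the simple-pole hypothesis (equivalently $c_i/c_j\notin q^{\mathbb{Z}}$ and the $q^{-n}/c_i$ disjoint from the $d_\ell q^m$) used precisely where you say it is.
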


Now we begin to prove Theorem \ref{thm-a}.

\begin{proof}
Heine's transformation formulas of $_2\phi_1$ series (cf. \cite
[Appendix III.2 and III.3]{Gasper}) read
\begin{align}
_{2}\phi_{1}\left[\begin{array}{c}
a,b\\
c
\end{array};q,\, z
\right] &=\frac{(c/a,az;q)_{\infty}}{(c,z;q)_{\infty}}
\,{_{2}\phi_{1}}\left[\begin{array}{c}
abz/c,a\\
az
\end{array};q,\, \frac{c}{a}
\right]
\label{Heine-a}\\[1mm]
&=\frac{(abz/c;q)_{\infty}}{(z;q)_{\infty}}
\,{_{2}\phi_{1}}\left[\begin{array}{c}
c/a,c/b\\
c
\end{array};q,\, \frac{abz}{c}
\right]. \label{Heine-b}
\end{align}
By means of \eqref{Heine-b}, we have
\begin{align}
&{_{2}\phi_{1}}\left[\begin{array}{c}
a,aq/c\\
aq/b
\end{array};q,\, \frac{cq}{abz}
\right] =\frac{(q/z;q)_{\infty}}{(cq/abz;q)_{\infty}}
\,{_{2}\phi_{1}}\left[\begin{array}{c}
q/b,c/b\\
aq/b
\end{array};q,\, \frac{q}{z}
\right],
\label{Heine-c}\\[1mm]
&{_{2}\phi_{1}}\left[\begin{array}{c}
b,bq/c\\
bq/a
\end{array};q,\, \frac{cq}{abz}
\right] =\frac{(q/z;q)_{\infty}}{(cq/abz;q)_{\infty}}
\,{_{2}\phi_{1}}\left[\begin{array}{c}
q/a,c/a\\
bq/a
\end{array};q,\, \frac{q}{z}
\right]. \label{Heine-d}
\end{align}

Substituting \eqref{Heine-a}, \eqref{Heine-c}, and \eqref{Heine-d}
into the three-term transformation formula of $_2\phi_1$ series (cf.
\cite [Appendix III.32]{Gasper}):
\begin{align*}
_{2}\phi_{1}\left[\begin{array}{c}
a,b\\
c
\end{array};q,\, z
\right]
&=\frac{(b,c/a,az,q/az;q)_{\infty}}{(c,b/a,z,q/z;q)_{\infty}}
\,{_{2}\phi_{1}}\left[\begin{array}{c}
a,aq/c\\
aq/b
\end{array};q,\, \frac{cq}{abz}
\right]
\\[1mm]
&\quad+\frac{(a,c/b,bz,q/bz;q)_{\infty}}{(c,a/b,z,q/z;q)_{\infty}}
\,{_{2}\phi_{1}}\left[\begin{array}{c}
b,bq/c\\
bq/a
\end{array};q,\, \frac{cq}{abz}
\right],
\end{align*}
it is routine to show that
\begin{align*}
&(c/a,az;q)_{\infty}\,{_{2}\phi_{1}}\left[\begin{array}{c}
abz/c,a\\
az
\end{array};q,\, \frac{c}{a}
\right]
\notag\\[1mm]
&\:=\frac{(a,c/b,bz,q/bz;q)_{\infty}}{(a/b,cq/abz;q)_{\infty}}
\,{_{2}\phi_{1}}\left[\begin{array}{c}
q/a,c/a\\
bq/a
\end{array};q,\, \frac{q}{z}
\right]
\notag\\[1mm]
&\quad+\frac{(b,c/a,az,q/az;q)_{\infty}}{(b/a,cq/abz;q)_{\infty}}
\,{_{2}\phi_{1}}\left[\begin{array}{c}
q/b,c/b\\
aq/b
\end{array};q,\, \frac{q}{z}
\right].
\end{align*}
Take $(a,b,c,z)\to(-x/y,x/y,0,-y^2/x)$  to obtain
\begin{align}
&(y;q)_{\infty}
\sum_{k=0}^{\infty}\frac{(-x/y;q)_k}{(q;q)_k(y;q)_k}q^{\binom{k}{2}}y^k
\notag\\[1mm]
&\:=\frac{(y,x/y,q/y;q)_{\infty}}{(-1;q)_{\infty}}
 \sum_{k=0}^{\infty}\frac{(qy/x;q)_k}{(q^2;q^2)_k}\bigg(-\frac{qx}{y^2}\bigg)^k
\notag\\[1mm]
&\quad+\frac{(-y,-x/y,-q/y;q)_{\infty}}{(-1;q)_{\infty}}
 \sum_{k=0}^{\infty}\frac{(-qy/x;q)_k}{(q^2;q^2)_k}\bigg(-\frac{qx}{y^2}\bigg)^k
 \label{eq:wei-aa}.
\end{align}
Choose $(A,B,D)=(2,1,0)$ and $(a_1,a_2,b_1,c_1,c_2)=(x,q,1,y,-y)$
 in Lemma \ref{lem-a} to gain
\begin{align}
&\oint
\frac{(xz,q,qz,1/z;q)_{\infty}}{(y^2z^2;q^2)_{\infty}}\frac{dz}{2\pi
iz}
\notag\\[1mm]
&\:=\frac{(y,x/y,q/y;q)_{\infty}}{(-1;q)_{\infty}}
 \sum_{k=0}^{\infty}\frac{(qy/x;q)_k}{(q^2;q^2)_k}\bigg(-\frac{qx}{y^2}\bigg)^k
\notag\\[1mm]
&\quad+\frac{(-y,-x/y,-q/y;q)_{\infty}}{(-1;q)_{\infty}}
 \sum_{k=0}^{\infty}\frac{(-qy/x;q)_k}{(q^2;q^2)_k}\bigg(-\frac{qx}{y^2}\bigg)^k
 \label{eq:wei-bb}.
\end{align}
The combination of \eqref{eq:wei-aa} and \eqref{eq:wei-bb} engenders
\begin{align}
\oint
\frac{(xz,q,qz,1/z;q)_{\infty}}{(y^2z^2;q^2)_{\infty}}\frac{dz}{2\pi
iz}=(y;q)_{\infty}\sum_{k=0}^{\infty}\frac{(-x/y;q)_k}{(q;q)_k(y;q)_k}q^{\binom{k}{2}}y^k.
 \label{eq:wei-cc}
\end{align}

Recall Euler's another $q$-exponential formula (cf. \cite[Appendix
(II.1)]{Gasper}) and Jacobi's product triple identity (cf.
\cite[Appendix (II.28)]{Gasper}) :
\begin{align}
&\:\:\sum_{k=0}^{\infty}\frac{z^k}{(q;q)_k}=\frac{1}{(z;q)_{\infty}},
\label{Euler-b}
\\[1mm]
&\sum_{k=-\infty}^{\infty}q^{\binom{k}{2}}z^{k}=(q,-z,-q/z;q)_{\infty}.
\label{Jacobi}
\end{align}
Employing \eqref{Euler-a}, \eqref{Euler-b}, and \eqref{Jacobi}, it
is not difficult to understand that
\begin{align}
\oint
&\frac{(xz,q,qz,1/z;q)_{\infty}}{(y^2z^2;q^2)_{\infty}}\frac{dz}{2\pi
iz}
\notag\\[1mm]
&=\oint\sum_{j=0}^{\infty}\frac{q^{\binom{j}{2}}(-xz)^j}{(q;q)_j}\sum_{k=0}^{\infty}\frac{(yz)^{2k}}{(q^2;q^2)_k}
\sum_{\ell=-\infty}^{\infty}q^{\binom{\ell}{2}}(-1/z)^{\ell}\frac{dz}{2\pi
iz}
\notag\\[1mm]
&=\sum_{j,\,k\geq0}\frac{q^{j^2+2jk+2k^2-j-k}}{(q;q)_j(q^2;q^2)_k}x^{j}y^{2k}.
\label{eq:wei-dd}
\end{align}
With the help of \eqref{eq:wei-cc} and \eqref{eq:wei-dd}, we catch
hold of \eqref{eq:wei-a}.
\end{proof}

\section{ Proof of
Theorems \ref{thm-d}-\ref{thm-f}}

For proving Theorems \ref{thm-d}-\ref{thm-f}, we draw support on the
following lemma (cf. \cite[Proposition 3.2]{Rosengren}).

\begin{lem}\label{lem-b}
Let $a,b,c,t$ be complex numbers such that $|t|<1$. Then
\begin{align*}
_{2}\phi_{1}\left[\begin{array}{c}
a,b\\
c
\end{array};q,\, t
\right]=\frac{(q;q)_{\infty}}{(c,t;q)_{\infty}}\oint
\frac{(abz,cz,qz/t,t/z;q)_{\infty}}{(az,bz,cz/t;q)_{\infty}}\frac{dz}{2\pi
iz},
\end{align*}
where the integral is over a positively oriented contour separating
the origin from all poles of the integrand.
\end{lem}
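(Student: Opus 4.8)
The plan is to clear the prefactor and then evaluate the remaining contour integral by the residue calculus, exactly as in the proof of Theorem \ref{thm-a}. Writing $J$ for $\oint \frac{(abz,cz,qz/t,t/z;q)_{\infty}}{(az,bz,cz/t;q)_{\infty}}\frac{dz}{2\pi iz}$, it suffices to show that $J=\frac{(c,t;q)_{\infty}}{(q;q)_{\infty}}\,{}_2\phi_1[a,b;c;q,t]$. Away from the essential singularity at the origin carried by $(t/z;q)_{\infty}$, the integrand is meromorphic, and the ratio of the leading coefficients of its numerator and denominator $q$-products equals $ab\cdot c\cdot(q/t)/(a\cdot b\cdot(c/t))=q$. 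Since $|q|<1$, this is precisely the decay condition (the analogue of $|a_1\cdots a_A/c_1\cdots c_A|<1$ in Lemma \ref{lem-a}) that lets me take the contour with the origin inside and all poles of $1/(az,bz,cz/t;q)_{\infty}$ outside, so that $J$ equals the residue at the origin, i.e.\ the constant term $[z^0]$ of $\frac{(abz,cz,qz/t,t/z;q)_{\infty}}{(az,bz,cz/t;q)_{\infty}}$, in the same spirit as \eqref{eq:wei-dd}.

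To extract this constant term I would expand every factor into its $q$-series. The factor $(t/z;q)_{\infty}$ is expanded in powers of $1/z$ by Euler's formula \eqref{Euler-a}, while the part $F(z)=\frac{(abz,cz,qz/t;q)_{\infty}}{(az,bz,cz/t;q)_{\infty}}$, which is analytic at $z=0$, is expanded into a power series $\sum_{m\geq0}f_mz^m$. Here the two ratios $\frac{(abz;q)_{\infty}}{(bz;q)_{\infty}}$ and $\frac{(cz;q)_{\infty}}{(cz/t;q)_{\infty}}$ are handled by the $q$-binomial theorem (cf.\ \cite{Gasper}), the factor $(qz/t;q)_{\infty}$ by \eqref{Euler-a}, and $1/(az;q)_{\infty}$ by \eqref{Euler-b}. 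Multiplying the two expansions and reading off $[z^0]$ then presents $J$ as a sum over the exponent $j$ arising from $(t/z;q)_{\infty}$ paired against $f_j$, each coefficient $f_j$ itself being a finite $q$-series in the parameters $a,b,c,t$.

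The hard part will be collapsing this multiple sum to the single series ${}_2\phi_1[a,b;c;q,t]$, for the coefficients $f_j$ package three a priori independent contributions (from the three denominator products) that must conspire into one $_2\phi_1$. I expect the cleanest route is to recognize the inner sums as balanced series and to apply Heine's transformation \eqref{Heine-b} (equivalently, the $q$-Gauss evaluation) to telescope the bookkeeping, after which the prefactor $(c,t;q)_{\infty}/(q;q)_{\infty}$ emerges automatically. A robust alternative that sidesteps the combinatorics is to verify that $J=J(t)$ and $R(t)=\frac{(c,t;q)_{\infty}}{(q;q)_{\infty}}\,{}_2\phi_1[a,b;c;q,t]$ satisfy the same first-order $q$-difference equation in $t$: for $R(t)$ this is a contiguous relation attached to Heine's transformations \eqref{Heine-a}--\eqref{Heine-b}, and for $J(t)$ it follows from the standard $z\mapsto qz$ invariance argument for such $q$-integrals together with the elementary ratio $(t/(qz);q)_{\infty}/(t/z;q)_{\infty}=1-t/(qz)$ and the corresponding ratios for the other factors. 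Matching the two sides at a single normalization (the leading behaviour as $t\to0$, where the Jacobi triple product \eqref{Jacobi} evaluates the degenerate integral) would then pin down the identity. Either way, the genuine obstacle is organizing the $q$-series manipulation so that the three contributions fuse into the target $_2\phi_1$.
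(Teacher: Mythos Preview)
The paper does not prove Lemma~\ref{lem-b} at all: it is quoted verbatim from Rosengren \cite[Proposition~3.2]{Rosengren} and used as a black box. So there is no ``paper's own proof'' to compare against, and your write-up is being measured only on its own merits.

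On those merits, the set-up is fine but the proof is not complete. You are right that with the stated contour the only singularity enclosed is the essential one at $z=0$, so $J$ equals the constant term of the integrand; your remark about the ratio $ab\cdot c\cdot(q/t)/(a\cdot b\cdot c/t)=q$ is, however, the convergence condition for the \emph{outer} residue sum in Lemma~\ref{lem-a}, not something needed to justify the constant-term reading, so that sentence is a red herring. The real gap is that the step you yourself flag as ``the hard part'' is never carried out. Your expansion scheme leaves the factor $\dfrac{(abz;q)_\infty}{(az,bz;q)_\infty}$ after the two $q$-binomial groupings, and this factor does \emph{not} have coefficients that are a single ratio of $q$-shifted factorials (its $q$-difference equation is second order in $z$), so the promised ``recognize the inner sums as balanced and apply \eqref{Heine-b}/$q$-Gauss'' is not a one-line telescoping; you would first have to resum two of the four indices nontrivially. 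The alternative $q$-difference-equation route in $t$ is also more delicate than stated: shifting $t\mapsto qt$ moves the pole family $z=tq^{-n}/c$, so the contour must move with it and an extra residue contribution has to be tracked before any recurrence for $J(t)$ can be written down. Either route can be made to work, but neither is executed here, so as it stands the proposal is a plan rather than a proof.

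If you want a cleaner line, the argument behind \cite[Proposition~3.2]{Rosengren} (which in turn rests on the Gasper--Rahman machinery around Lemma~\ref{lem-a}) produces the ${}_2\phi_1$ directly as a single residue series rather than by first expanding into a four-fold sum and then collapsing; organizing the computation that way avoids the combinatorial bottleneck you identify.
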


Firstly, we start to prove Theorem \ref{thm-d}.

\begin{proof}
Choosing $(a,q)\to(a^2,q^2)$ in the $q$-binomial theorem:
\begin{align}
&{_{1}\phi_{0}}\left[\begin{array}{c}
a\\
-
\end{array};q,\, t
\right] =\frac{(at;q)_{\infty}}{(t;q)_{\infty}},
  \label{Bimo}
 \end{align}
it is ordinary to see that
\begin{align*}
&{_{2}\phi_{1}}\left[\begin{array}{c}
a,-a\\
-q
\end{array};q,\, t
\right] =\frac{(a^2t;q^2)_{\infty}}{(t;q^2)_{\infty}}.
 \end{align*}
Fix $(a,b,c,t)=(y^{\frac{1}{2}},-y^{\frac{1}{2}},-q,x)$ in Lemma
\ref{lem-b} and use the above identity to deduce
\begin{align}
\oint
\frac{(-yz,-qz,q,qz/x,x/z;q)_{\infty}}{(yz^2;q^2)_{\infty}(-qz/x;q)_{\infty}}\frac{dz}{2\pi
iz}=(-q;q)_{\infty}(qx,yx;q^2)_{\infty}.
 \label{eq:wei-a3}
\end{align}
Via \eqref{Euler-a}, \eqref{Euler-b}, \eqref{Jacobi}, and
\eqref{Bimo}, we have
\begin{align}
&\oint
\frac{(-yz,-qz,q,qz/x,x/z;q)_{\infty}}{(yz^2;q^2)_{\infty}(-qz/x;q)_{\infty}}\frac{dz}{2\pi
iz}\notag\\[1mm]
&\quad=\oint\sum_{j=0}^{\infty}\frac{(x;q)_j}{(q;q)_j}\Big(-\frac{qz}{x}\Big)^j\sum_{k=0}^{\infty}\frac{q^{\binom{k}{2}}(yz)^{k}}{(q;q)_k}
\notag
\end{align}
\begin{align}
&\qquad\times\sum_{\ell=0}^{\infty}\frac{(yz^2)^{\ell}}{(q^2;q^2)_{\ell}}\sum_{m=-\infty}^{\infty}(-1)^{m}q^{\binom{m}{2}}(x/z)^{m}\frac{dz}{2\pi
iz}
\notag\\[1mm]
&\quad=\sum_{j,\,k,\,\ell\,\geq0}\frac{(x;q)_j(-x)^{k+2\ell}y^{k+\ell}}{(q;q)_j(q;q)_k(q^2;q^2)_{\ell}}
q^{j+\binom{k}{2}+\binom{j+k+2\ell}{2}}. \label{eq:wei-b3}
\end{align}
The combination of \eqref{eq:wei-a3} with \eqref{eq:wei-b3} supplies
\eqref{eq:wei-d}.
\end{proof}

Secondly, we plan to prove Theorem \ref{thm-e}.

\begin{proof}
Replace $t$ by $-t$ in  \eqref{Bimo} to obtain
\begin{align}
&{_{1}\phi_{0}}\left[\begin{array}{c}
a\\
-
\end{array};q,\, -t
\right] =\frac{(-at;q)_{\infty}}{(-t;q)_{\infty}}.
  \label{Bimo-a}
 \end{align}
The sum of \eqref{Bimo} and \eqref{Bimo-a} produces
\begin{align*}
&{_{2}\phi_{1}}\left[\begin{array}{c}
a,aq\\
q
\end{array};q^2,\, t^2
\right]=\frac{1}{2}\frac{(at;q)_{\infty}}{(t;q)_{\infty}}+\frac{1}{2}\frac{(-at;q)_{\infty}}{(-t;q)_{\infty}}.
 \end{align*}
Take $(q,a,b,c,t)\to(q^2,x,qx,q,y^2)$ in Lemma \ref{lem-b} and
utilize the last equation to gain
\begin{align}
\oint
\frac{(qx^2z,qz,q^2,q^2z/y^2,y^2/z;q^2)_{\infty}}{(xz;q)_{\infty}(qz/y^2;q^2)_{\infty}}\frac{dz}{2\pi
iz}=\frac{(q;q^2)_{\infty}}{2}\big\{(xy,-y;q)_{\infty}+(-xy,y;q)_{\infty}\big\}.
 \label{eq:wei-c3}
\end{align}

Through \eqref{Euler-a}, \eqref{Euler-b}, \eqref{Jacobi}, and
\eqref{Bimo}, we arrive at
\begin{align}
&\oint
\frac{(qx^2z,qz,q^2,q^2z/y^2,y^2/z;q^2)_{\infty}}{(xz;q)_{\infty}(qz/y^2;q^2)_{\infty}}\frac{dz}{2\pi
iz}\notag\\[1mm]
&\quad=\oint\sum_{j=0}^{\infty}\frac{(xz)^j}{(q;q)_j}\sum_{k=0}^{\infty}\frac{(x^2y^2;q^2)_k}{(q^2;q^2)_k}\Big(\frac{qz}{y^2}\Big)^k
\notag\\[1mm]
&\qquad\times\sum_{\ell=0}^{\infty}\frac{(-1)^{\ell}q^{2\binom{\ell}{2}}(qz)^{\ell}}{(q^2;q^2)_{\ell}}\sum_{m=-\infty}^{\infty}(-1)^{m}q^{2\binom{m}{2}}(y^2/z)^{m}\frac{dz}{2\pi
iz}
\notag\\[1mm]
&\quad=\sum_{j,\,k,\,\ell\,\geq0}\frac{(x^2y^2;q^2)_kx^jy^{2j+2\ell}}{(q;q)_j(q^2;q^2)_k(q^2;q^2)_{\ell}}
(-1)^{j+k}q^{(j+k+\ell)(j+k+\ell-1)+\ell^2+k}. \label{eq:wei-d3}
\end{align}
Substituting \eqref{eq:wei-d3} into \eqref{eq:wei-c3}, we get hold
of \eqref{eq:wei-e1}.
\end{proof}

Thirdly, we shall prove Theorem \ref{thm-f}.

\begin{proof}
The difference of \eqref{Bimo} and \eqref{Bimo-a} can be expressed
as
\begin{align*}
&{_{2}\phi_{1}}\left[\begin{array}{c}
aq,aq^2\\
q^3
\end{array};q^2,\, t^2
\right]=\frac{1-q}{2(1-a)t}\bigg\{\frac{(at;q)_{\infty}}{(t;q)_{\infty}}-\frac{(-at;q)_{\infty}}{(-t;q)_{\infty}}\bigg\}.
 \end{align*}
Let $(q,a,b,c,t)\to(q^2, q^2x, q^3x,q^3, y^2/q^2)$ in Lemma
\ref{lem-b} and employ the upper formula to derive
\begin{align}
&\oint
\frac{(q^5x^2z,q^3z,q^2,q^4z/y^2,y^2/zq^2;q^2)_{\infty}}{(q^2xz;q)_{\infty}(q^5z/y^2;q^2)_{\infty}}\frac{dz}{2\pi
iz}
\notag\\[1mm]
&\quad=\frac{(q;q^2)_{\infty}}{2(y/q-xy)}\big\{(xy,-y/q;q)_{\infty}-(-xy,y/q;q)_{\infty}\big\}.
 \label{eq:wei-e3}
\end{align}

In terms of \eqref{Euler-a}, \eqref{Euler-b}, \eqref{Jacobi}, and
\eqref{Bimo}, there is
\begin{align}
&\oint
\frac{(q^5x^2z,q^3z,q^2,q^4z/y^2,y^2/zq^2;q^2)_{\infty}}{(q^2xz;q)_{\infty}(q^5z/y^2;q^2)_{\infty}}\frac{dz}{2\pi
iz}\notag\\[1mm]
&\quad=\oint\sum_{j=0}^{\infty}\frac{(q^2xz)^j}{(q;q)_j}\sum_{k=0}^{\infty}\frac{(x^2y^2;q^2)_k}{(q^2;q^2)_k}\Big(\frac{q^5z}{y^2}\Big)^k
\notag
\\[1mm]
&\qquad\times\sum_{\ell=0}^{\infty}\frac{(-1)^{\ell}q^{2\binom{\ell}{2}}(q^3z)^{\ell}}{(q^2;q^2)_{\ell}}\sum_{m=-\infty}^{\infty}(-1)^{m}q^{2\binom{m}{2}}(y^2/zq^2)^{m}\frac{dz}{2\pi
iz}
\notag\\[1mm]
&\quad=\sum_{j,\,k,\,\ell\,\geq0}\frac{(x^2y^2;q^2)_kx^jy^{2j+2\ell}}{(q;q)_j(q^2;q^2)_k(q^2;q^2)_{\ell}}
(-1)^{j+k}q^{(j+k+\ell)(j+k+\ell-1)+\ell^2+3k}. \label{eq:wei-f3}
\end{align}
Substituting \eqref{eq:wei-f3} into \eqref{eq:wei-e3}, we are led to
\eqref{eq:wei-f}.
\end{proof}

\section{The bisection method and
multidimensional Rogers--Ramanujan type identities}

In this section, we shall use the bisection method to establish
several new multidimensional Rogers--Ramanujan type identities.

\begin{thm}
\begin{align}
&\sum_{j,\,k\geq0}\frac{q^{4j^2+4jk+2k^2-j}}{(q;q)_{2j}(q^2;q^2)_k}
=\frac{(q^8,-q^3,-q^5;q^8)_{\infty}}{(q^2;q^2)_{\infty}},
 \label{eq:wei-a4}
\\[1mm]
&\sum_{j,\,k\geq0}\frac{q^{4j^2+4jk+2k^2+3j+2k}}{(q;q)_{1+2j}(q^2;q^2)_k}
=\frac{(q^8,-q,-q^7;q^8)_{\infty}}{(q^2;q^2)_{\infty}}.
 \label{eq:wei-b4}
\end{align}
\end{thm}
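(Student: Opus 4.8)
The plan is to derive both \eqref{eq:wei-a4} and \eqref{eq:wei-b4} simultaneously by applying the bisection method to Cao and Wang's identity \eqref{wang-b} at the specialization $x=q^{-1/2}$. Writing $F(x)=\sum_{j,k\geq0}\frac{q^{j^2+2jk+2k^2+k}}{(q;q)_j(q^2;q^2)_k}x^{j+2k}=(-qx;q)_{\infty}$, the key point is that sending $x\mapsto-x$ multiplies the summand by $(-1)^{j+2k}=(-1)^j$, while on the product side it sends $(-qx;q)_{\infty}$ to $(qx;q)_{\infty}$. Hence $\tfrac12\{F(x)+F(-x)\}$ retains exactly the terms with $j$ even and $\tfrac12\{F(x)-F(-x)\}$ the terms with $j$ odd; this is precisely the bisection of the index $j$ that splits $(q;q)_j$ into $(q;q)_{2i}$ and $(q;q)_{1+2i}$, matching the denominators appearing in \eqref{eq:wei-a4} and \eqref{eq:wei-b4}.

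First I would put $x=q^{-1/2}$, so that $F(q^{-1/2})=(-q^{1/2};q)_{\infty}$ and $F(-q^{-1/2})=(q^{1/2};q)_{\infty}$. For the even part, substituting $j=2i$ and absorbing $x^{2i+2k}=q^{-i-k}$ into the exponent converts $q^{4i^2+4ik+2k^2+k}q^{-i-k}$ into $q^{4i^2+4ik+2k^2-i}$, which is the summand of \eqref{eq:wei-a4}; for the odd part, $j=2i+1$ yields $q^{4i^2+4ik+2k^2+3i+2k+1/2}$, namely $q^{1/2}$ times the summand of \eqref{eq:wei-b4}. The bisection therefore reduces the two theorems to the product evaluations
\begin{align*}
\sum_{j,k\geq0}\frac{q^{4j^2+4jk+2k^2-j}}{(q;q)_{2j}(q^2;q^2)_k}&=\frac{(-q^{1/2};q)_{\infty}+(q^{1/2};q)_{\infty}}{2},\\
\sum_{j,k\geq0}\frac{q^{4j^2+4jk+2k^2+3j+2k}}{(q;q)_{1+2j}(q^2;q^2)_k}&=\frac{(-q^{1/2};q)_{\infty}-(q^{1/2};q)_{\infty}}{2\,q^{1/2}}.
\end{align*}

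The hard part will be identifying these two $q^{1/2}$-products with the stated theta quotients. Expanding by Euler's formula \eqref{Euler-a} gives $(-q^{1/2};q)_{\infty}=\sum_{n\geq0}\frac{q^{n^2/2}}{(q;q)_n}$ and $(q^{1/2};q)_{\infty}=\sum_{n\geq0}\frac{(-1)^nq^{n^2/2}}{(q;q)_n}$, so taking the even and odd values of $n$ collapses the right-hand sides to the single sums $\sum_{m\geq0}\frac{q^{2m^2}}{(q;q)_{2m}}$ and $\sum_{m\geq0}\frac{q^{2m^2+2m}}{(q;q)_{2m+1}}$. It then suffices to prove the modulus-$8$ Rogers--Ramanujan--Slater identities $\sum_{m\geq0}\frac{q^{2m^2}}{(q;q)_{2m}}=\frac{(q^8,-q^3,-q^5;q^8)_{\infty}}{(q^2;q^2)_{\infty}}$ and $\sum_{m\geq0}\frac{q^{2m^2+2m}}{(q;q)_{2m+1}}=\frac{(q^8,-q,-q^7;q^8)_{\infty}}{(q^2;q^2)_{\infty}}$, which are of the same flavour as \eqref{Slater-a} and \eqref{Slater-b}. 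I would close the argument by rewriting the two right-hand products as the theta series $\sum_m q^{4m^2-m}$ and $\sum_m q^{4m^2-3m}$ via Jacobi's triple product \eqref{Jacobi} and matching them against these single sums (or simply by quoting the corresponding entries of Slater's list). The bisection and the exponent bookkeeping are routine; the genuine content resides in this final theta-function step.
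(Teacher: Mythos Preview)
Your proposal is correct and follows essentially the same route as the paper: bisect \eqref{wang-b} in the index $j$ via $x\mapsto-x$, specialize at $x=q^{-1/2}$, and then evaluate $\tfrac12\{(-q^{1/2};q)_\infty\pm(q^{1/2};q)_\infty\}$ as modulus-$8$ theta quotients. The only difference is cosmetic: the paper quotes the product evaluations directly as the known relations $(-q;q^2)_\infty\pm(q;q^2)_\infty=\tfrac{2q^{(1\mp1)/2}}{(q^4;q^4)_\infty}(q^{16},-q^{8\mp2},-q^{8\pm2};q^{16})_\infty$ (then sends $q\to q^{1/2}$), whereas you unpack them through a second Euler bisection down to the single-sum Slater identities (nos.\ 38--39). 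Both finish the job; the paper's citation is shorter, yours is more self-contained.
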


\begin{proof}
Replace $x$ by $-x$ in  \eqref{wang-b} to achieve
\begin{align}
\sum_{j,\,k\geq0}\frac{q^{j^2+2jk+2k^2+k}}{(q;q)_j(q^2;q^2)_k}(-1)^jx^{j+2k}
=(qx;q)_{\infty}.
 \label{eq:wei-c4}
\end{align}
The sum of \eqref{wang-b} and  \eqref{eq:wei-c4} creates
\begin{align}
\sum_{j,\,k\geq0}\frac{q^{4j^2+4jk+2k^2+k}}{(q;q)_{2j}(q^2;q^2)_k}x^{2j+2k}
=\frac{1}{2}\Big\{(-qx;q)_{\infty}+(qx;q)_{\infty}\Big\}.
\label{bisection-a}
\end{align}
Notice a known relation (cf. \cite[Equations (1.2a)]{Wang}):
\begin{align}
(-q;q^2)_{\infty}+(q;q^2)_{\infty}
=\frac{2}{(q^4;q^4)_{\infty}}(q^{16},-q^{6},-q^{10};q^{16})_{\infty}
 \label{WCY-a}.
\end{align}
Combing the $x=q^{-\frac{1}{2}}$ case of \eqref{bisection-a} and
\eqref{WCY-a}, we obtain \eqref{eq:wei-a4}.

The difference of \eqref{wang-b} and  \eqref{eq:wei-c4} engenders
\begin{align}
\sum_{j,\,k\geq0}\frac{q^{4j^2+4jk+2k^2+4j+3k+1}}{(q;q)_{1+2j}(q^2;q^2)_k}x^{1+2j+2k}
=\frac{1}{2}\Big\{(-qx;q)_{\infty}-(qx;q)_{\infty}\Big\}.
\label{bisection-b}
\end{align}
Notice another known relation (cf. \cite[Equations (1.2b)]{Wang}):
\begin{align}
&(-q;q^2)_{\infty}-(q;q^2)_{\infty}
=\frac{2q}{(q^4;q^4)_{\infty}}(q^{16},-q^{2},-q^{14};q^{16})_{\infty}.
 \label{WCY-b}
\end{align}
Combing the $x=q^{-\frac{1}{2}}$ case of \eqref{bisection-b} with
\eqref{WCY-b}, we discover \eqref{eq:wei-b4}.
\end{proof}

\begin{thm}Let $m$ be a nonnegative integer. Then
\begin{align}
&\sum_{j,\,k,\,\ell\,\geq0}\frac{(q;q^2)_k(-1)^{k}}{(q;q)_{2j}(q^2;q^2)_k(q^2;q^2)_{\ell}}
q^{(2j+k+\ell)(2j+k+\ell-1)+\ell^2+j+k-2m(j+\ell)}
\notag\\[1mm]
&\quad=\frac{(-q^{-m};q)_m(q^8,-q^3,-q^5;q^8)_{\infty}}{(q^2;q^2)_{\infty}},
\label{eq:wei-d4}\\[1mm]
&\sum_{j,\,k,\,\ell\,\geq0}\frac{(q;q^2)_k(-1)^{k}}{(q;q)_{1+2j}(q^2;q^2)_k(q^2;q^2)_{\ell}}
q^{(1+2j+k+\ell)(2j+k+\ell)+\ell^2+j+k-m(1+2j+2\ell)}
\notag\\[1mm]
&\quad=\frac{(-q^{-m};q)_m(q^8,-q,-q^7;q^8)_{\infty}}{(q^2;q^2)_{\infty}}.
 \label{eq:wei-e4}
\end{align}
\end{thm}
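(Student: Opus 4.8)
The plan is to run the same bisection machinery that produced \eqref{eq:wei-a4} and \eqref{eq:wei-b4}, but to feed it the one-parameter triple sum of Corollary \ref{cor-b}, namely \eqref{eq:wei-g}, instead of the parameter-free formula \eqref{wang-b}. The crucial structural observation is that in \eqref{eq:wei-g} the variable $x$ enters the summand only through $(x^2;q^2)_k$ and $x^j$; hence replacing $x$ by $-x$ leaves the factor $(x^2;q^2)_k$ intact and merely inserts a sign $(-1)^j$, while on the right-hand side $(-x;q)_\infty$ turns into $(x;q)_\infty$. This is exactly what makes a clean bisection on the index $j$ possible, and it is why $(x^2;q^2)_k$ survives into \eqref{eq:wei-d4} and \eqref{eq:wei-e4}.

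First I would record \eqref{eq:wei-g} alongside its $x\to-x$ companion
$$\sum_{j,\,k,\,\ell\,\geq0}\frac{(x^2;q^2)_k(-1)^{j+k}x^j}{(q;q)_j(q^2;q^2)_k(q^2;q^2)_\ell}q^{(j+k+\ell)(j+k+\ell-1)+\ell^2+k-m(j+2\ell)}=(-q^{-m};q)_m(x;q)_\infty.$$
Adding the two isolates the even-$j$ terms; writing $j=2j$ converts $(q;q)_j$ into $(q;q)_{2j}$ and sends $-m(j+2\ell)$ to $-2m(j+\ell)$, matching the left side of \eqref{eq:wei-d4}. Specializing $x=q^{1/2}$ (treating $q^{1/2}$ formally, or assuming $q>0$) replaces $(x^2;q^2)_k$ by $(q;q^2)_k$ and emits an extra $x^{2j}=q^{j}$, which is precisely the $+j$ in the exponent of \eqref{eq:wei-d4}. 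On the product side one evaluates $(-q^{1/2};q)_\infty+(q^{1/2};q)_\infty$ by the $q\to q^{1/2}$ instance of \eqref{WCY-a}, giving $\tfrac{2}{(q^2;q^2)_\infty}(q^8,-q^3,-q^5;q^8)_\infty$; dividing by $2$ yields \eqref{eq:wei-d4}.

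Symmetrically, subtracting the companion from \eqref{eq:wei-g} isolates the odd-$j$ terms; writing $j=2j+1$ turns $(q;q)_j$ into $(q;q)_{1+2j}$ and reshapes $(j+k+\ell)(j+k+\ell-1)$ into the $(1+2j+k+\ell)(2j+k+\ell)$ form of \eqref{eq:wei-e4}. Setting $x=q^{1/2}$ now gives $x^{2j+1}=q^{j+1/2}$, so a global factor $q^{1/2}$ is pulled out of the whole sum while the residual $q^{j}$ again accounts for the linear term $+j$. The right-hand bracket $(-q^{1/2};q)_\infty-(q^{1/2};q)_\infty$ is supplied by the $q\to q^{1/2}$ instance of \eqref{WCY-b}, which itself carries a factor $q^{1/2}$; the two copies of $q^{1/2}$ cancel and, after dividing by $2$, \eqref{eq:wei-e4} drops out.

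The only genuine obstacle is the exponent bookkeeping, which is routine but easy to mismanage: one must check that under $j\mapsto 2j$ and $j\mapsto 2j+1$ the quadratic $(j+k+\ell)(j+k+\ell-1)$ expands exactly to $(2j+k+\ell)(2j+k+\ell-1)$ and to $(1+2j+k+\ell)(2j+k+\ell)$, and that the $q$-power harvested from $x=q^{1/2}$ (an extra $q^{j}$ in both cases, plus the global $q^{1/2}$ in the odd case) reconciles the shift $-m(j+2\ell)\mapsto -2m(j+\ell)$ and the appearance of $+j$. Once this matching is verified, both identities are immediate consequences of the rescaled product formulas \eqref{WCY-a} and \eqref{WCY-b}, and the case $m=0$ correctly recovers \eqref{eq:wei-a4} and \eqref{eq:wei-b4}.
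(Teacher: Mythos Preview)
Your proposal is correct and follows exactly the paper's own argument: replace $x$ by $-x$ in \eqref{eq:wei-g}, take the sum and difference to bisect on the parity of $j$, specialize $x=q^{1/2}$, and then invoke the $q\to q^{1/2}$ instances of \eqref{WCY-a} and \eqref{WCY-b} to evaluate $(-q^{1/2};q)_\infty\pm(q^{1/2};q)_\infty$. Your exponent bookkeeping (the extra $q^{j}$ from $x^{2j}$ and the global $q^{1/2}$ in the odd case) is accurate and is precisely what the paper leaves implicit.
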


\begin{proof}
Replace $x$ by $-x$ in  \eqref{eq:wei-g} to gain
\begin{align}
&\sum_{j,\,k,\,\ell\,\geq0}\frac{(x^2;q^2)_k\,(-x)^j(-1)^{k}}{(q;q)_j(q^2;q^2)_k(q^2;q^2)_{\ell}}
q^{(j+k+\ell)(j+k+\ell-1)+\ell^2+k-m(j+2\ell)}
\notag\\
&\quad=(-q^{-m};q)_m(x;q)_{\infty}.
 \label{eq:wei-f4}
\end{align}
According to \eqref{WCY-a} and the $x=q^{\frac{1}{2}}$ case of the
sum of \eqref{eq:wei-g} and  \eqref{eq:wei-f4}, we catch hold of
\eqref{eq:wei-d4}. In accordance with \eqref{WCY-b} and the
$x=q^{\frac{1}{2}}$ case of the difference of \eqref{eq:wei-g} and
\eqref{eq:wei-f4}, we can verify \eqref{eq:wei-e4}.
\end{proof}

\begin{thm}
\begin{align}
&\sum_{j,\,k,\,\ell\,\geq0}\frac{(q^{-1};q^2)_k(-1)^{k}}{(q;q)_{2j}(q^2;q^2)_k(q^2;q^2)_{\ell}}
q^{(2j+k+\ell)(2j+k+\ell-1)+\ell^2+j+3k+2\ell}
\notag\\[1mm]
&\quad=\frac{(q^8,-q^3,-q^5;q^8)_{\infty}}{(q^2;q^2)_{\infty}},
\label{eq:wei-g4}\\[1mm]
&\sum_{j,\,k,\,\ell\,\geq0}\frac{(q^{-1};q^2)_k(-1)^{k}}{(q;q)_{1+2j}(q^2;q^2)_k(q^2;q^2)_{\ell}}
q^{(1+2j+k+\ell)(2j+k+\ell)+\ell^2+j+3k+2\ell}
\notag\\[1mm]
&\quad=\frac{(q^8,-q,-q^7;q^8)_{\infty}}{(q^2;q^2)_{\infty}}.
 \label{eq:wei-h4}
\end{align}
\end{thm}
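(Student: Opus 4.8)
The plan is to mimic the bisection argument already used for \eqref{eq:wei-d4}--\eqref{eq:wei-e4}, but to apply it to Corollary \ref{cor-d} in place of Corollary \ref{cor-b}. The first task is to pin down the parameter $m$ in \eqref{eq:wei-h} so that, after bisecting in the power of $x$ and specializing $x$, the factor $(x^2;q^2)_k$ becomes $(q^{-1};q^2)_k$ and the exponents match those in \eqref{eq:wei-g4}--\eqref{eq:wei-h4}. Comparing the $\ell$-dependence forces $m=0$: setting $m=0$ turns the term $-(m-1)(j+2\ell)$ in \eqref{eq:wei-h} into $+(j+2\ell)$, supplying exactly the $+2\ell$ (and part of the $+j$) seen in \eqref{eq:wei-g4}--\eqref{eq:wei-h4}, whereas $m=1$ would kill that term and could never reproduce the $2\ell$. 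At $m=0$ the prefactor $(-q^{-m};q)_m$ is the empty product $1$, and using $(-x;q)_\infty=(1+x)(-qx;q)_\infty$ the right-hand side of \eqref{eq:wei-h} collapses to
\[
\sum_{j,k,\ell\ge0}\frac{(x^2;q^2)_k\,x^j(-1)^k}{(q;q)_j(q^2;q^2)_k(q^2;q^2)_\ell}\,q^{(j+k+\ell)(j+k+\ell-1)+\ell^2+3k+j+2\ell}=(-qx;q)_\infty .
\]
(One may alternatively read this off from Theorem \ref{thm-f} with $(x,y)\to(-x/q,q)$, where the apparent factor $(q;q^2)_\infty(-1;q)_\infty/2$ equals $1$ by $(q,-q,-q^2;q^2)_\infty=1$.)

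Call the displayed identity $(\ast)$ and let $(\ast\ast)$ be its image under $x\mapsto-x$, whose right-hand side is $(qx;q)_\infty$. Because the $x$-dependence of the summand is $(x^2;q^2)_k x^j$, whose parity is governed solely by $j$, adding $(\ast)$ and $(\ast\ast)$ retains only the even values of $j$, while subtracting them retains only the odd values. In the sum I would write $j\mapsto 2j$, so that $(q;q)_j\mapsto(q;q)_{2j}$ and $(j+k+\ell)(j+k+\ell-1)\mapsto(2j+k+\ell)(2j+k+\ell-1)$; in the difference I would write $j\mapsto 2j+1$, producing $(q;q)_{1+2j}$ and $(1+2j+k+\ell)(2j+k+\ell)$. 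Then I would set $x=q^{-1/2}$ (a legitimate value since $x$ is arbitrary), so that $(x^2;q^2)_k=(q^{-1};q^2)_k$ and $x^{2j}=q^{-j}$, respectively $x^{2j+1}=q^{-j-1/2}$. In the even case the factor $q^{-j}$ combines with the explicit $+2j$ to leave $+j$, giving precisely the exponent of \eqref{eq:wei-g4}; in the odd case the same substitution leaves a common factor $q^{1/2}$ which can be pulled outside the sum, reducing the exponent to that of \eqref{eq:wei-h4}.

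It remains to identify the two right-hand sides. At $x=q^{-1/2}$ the sum and difference of the right members of $(\ast)$ and $(\ast\ast)$ are $(-q^{1/2};q)_\infty\pm(q^{1/2};q)_\infty$. Replacing $q$ by $q^{1/2}$ in \eqref{WCY-a} and \eqref{WCY-b} yields
\[
(-q^{1/2};q)_\infty+(q^{1/2};q)_\infty=\frac{2}{(q^2;q^2)_\infty}(q^8,-q^3,-q^5;q^8)_\infty,
\]
\[
(-q^{1/2};q)_\infty-(q^{1/2};q)_\infty=\frac{2q^{1/2}}{(q^2;q^2)_\infty}(q^8,-q,-q^7;q^8)_\infty .
\]
Cancelling the factor $2$ against the $2$ produced by the bisection yields \eqref{eq:wei-g4}, while in the second case the extra $q^{1/2}$ on the right matches the $q^{1/2}$ extracted from the odd sum, and cancelling $2q^{1/2}$ yields \eqref{eq:wei-h4}. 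I expect the only genuine obstacle to be the bookkeeping of the half-integral powers of $q$: one must track the exponents carefully through the substitutions $j\mapsto 2j,\,2j+1$ and $x=q^{-1/2}$ so that the stray $q^{\pm1/2}$ factors cancel exactly. Everything else is the routine bisection together with the $q\mapsto q^{1/2}$ rescalings of \eqref{WCY-a}--\eqref{WCY-b}.
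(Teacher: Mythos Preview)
Your proposal is correct and is essentially identical to the paper's own proof: the paper also specializes $m=0$ in Corollary~\ref{cor-d} to obtain the identity you call $(\ast)$ (their \eqref{eq:wei-i4}), replaces $x\mapsto -x$, takes the sum and difference, sets $x=q^{-1/2}$, and invokes \eqref{WCY-a}--\eqref{WCY-b} (implicitly with $q\mapsto q^{1/2}$) to reach \eqref{eq:wei-g4} and \eqref{eq:wei-h4}. Your write-up simply spells out the exponent bookkeeping and the half-integral cancellations in more detail than the paper does.
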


\begin{proof}
The $m=0$ case of \eqref{eq:wei-h} can be written as
\begin{align}
\sum_{j,\,k,\,\ell\,\geq0}\frac{(x^2;q^2)_k\,x^j(-1)^{k}}{(q;q)_j(q^2;q^2)_k(q^2;q^2)_{\ell}}
q^{(j+k+\ell)(j+k+\ell-1)+\ell^2+j+3k+2\ell}=(-qx;q)_{\infty}.
 \label{eq:wei-i4}
\end{align}
Replacing $x$ by $-x$ in \eqref{eq:wei-i4}, we have
\begin{align}
\sum_{j,\,k,\,\ell\,\geq0}\frac{(x^2;q^2)_k\,(-x)^j(-1)^{k}}{(q;q)_j(q^2;q^2)_k(q^2;q^2)_{\ell}}
q^{(j+k+\ell)(j+k+\ell-1)+\ell^2+j+3k+2\ell}=(qx;q)_{\infty}.
 \label{eq:wei-j4}
\end{align}
Via \eqref{WCY-a} and the $x=q^{-\frac{1}{2}}$ case of the sum of
\eqref{eq:wei-i4} and  \eqref{eq:wei-j4}, we find \eqref{eq:wei-g4}.
Through \eqref{WCY-b} and the $x=q^{-\frac{1}{2}}$ case of the
difference of \eqref{eq:wei-i4} and \eqref{eq:wei-j4}, we can
confirm \eqref{eq:wei-h4}.
\end{proof}

\begin{thm} Let $x$ be a complex number. Then
\begin{align}
&\sum_{j,\,k,\,\ell\,\geq0}\frac{(x;q^2)_k(-1)^{k}x^{-\ell}}{(q;q)_j(q^2;q^2)_k(q^2;q^2)_{\ell}}
q^{(j+k+\ell)(j+k+\ell-1)+\ell^2+j+k+2\ell}
\notag\\[1mm]
&\quad=\frac{(-qx,-q^{3}/x;q^4)_{\infty}}{(q^2;q^4)_{\infty}},
\label{eq:wei-k4}
\\[1mm]
&\sum_{j,\,k,\,\ell\,\geq0}\frac{(x;q^2)_k(-1)^{k}x^{-\ell}}{(q;q)_j(q^2;q^2)_k(q^2;q^2)_{\ell}}
q^{(j+k+\ell)(j+k+\ell-1)+\ell^2+2j+3k+4\ell}
\notag\\[1mm]
&\quad =\frac{(-q^{3}x,-q^{5}/x;q^4)_{\infty}}{(q^2;q^4)_{\infty}}.
\label{eq:wei-l4}
\end{align}
\end{thm}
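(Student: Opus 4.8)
The plan is to obtain \eqref{eq:wei-k4} from Theorem \ref{thm-e} and \eqref{eq:wei-l4} from Theorem \ref{thm-f} by specializing the two free parameters in the same spirit that produced Corollaries \ref{cor-c} and \ref{cor-e}, except that here the specialization introduces a \emph{negative} half-power of $x$ together with a sign. Concretely, I would set $(x,y)\to(-x/q,\,qx^{-1/2})$ in Theorem \ref{thm-e} and $(x,y)\to(-x/q^2,\,q^2x^{-1/2})$ in Theorem \ref{thm-f}. The sum/difference structure already present on the right-hand sides of these two theorems is exactly what realizes the bisection, so no auxiliary $x\mapsto-x$ step is required.

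First I would verify that the substitutions collapse the left-hand sides correctly. In both cases the argument of $(x^2y^2;q^2)_k$ becomes $x$, so that factor turns into $(x;q^2)_k$. For Theorem \ref{thm-e} the remaining weight $x^jy^{2j+2\ell}(-1)^{j+k}$ evaluates to $(-1)^k x^{-\ell}q^{j+2\ell}$, and the extra $q^{j+2\ell}$ shifts the exponent $(j+k+\ell)(j+k+\ell-1)+\ell^2+k$ into $(j+k+\ell)(j+k+\ell-1)+\ell^2+j+k+2\ell$, matching the summand of \eqref{eq:wei-k4}; for Theorem \ref{thm-f} the same weight gives $(-1)^k x^{-\ell}q^{2j+4\ell}$, and starting from the exponent $\ldots+\ell^2+3k$ one lands on $\ldots+\ell^2+2j+3k+4\ell$, matching \eqref{eq:wei-l4}. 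This bookkeeping is routine; the sign $(-1)^j$ carried by $x^j$ cancels the $(-1)^j$ inside $(-1)^{j+k}$, which is why only $(-1)^k$ survives.

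The substantive step is the right-hand side. Under these specializations the products in Theorems \ref{thm-e} and \ref{thm-f} both become $(\mp x^{1/2},\mp qx^{-1/2};q)_\infty$, and I would evaluate their sum (resp.\ difference) by Jacobi's triple product \eqref{Jacobi}: each factor equals $\tfrac{1}{(q;q)_\infty}\sum_n q^{\binom{n}{2}}(\pm x^{1/2})^n$, so the combination selects the even (resp.\ odd) index $n$. Writing $n=2m$ (resp.\ $n=2m+1$) turns the surviving series into $\tfrac{2}{(q;q)_\infty}\sum_m q^{2m^2-m}x^m$ for \eqref{eq:wei-k4} and $\tfrac{2x^{1/2}}{(q;q)_\infty}\sum_m q^{2m^2+m}x^m$ for \eqref{eq:wei-l4}. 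After using $(q;q)_\infty=(q;q^2)_\infty(q^2;q^2)_\infty$ to cancel the prefactor $(q;q^2)_\infty$, a second application of \eqref{Jacobi} with base $q^4$ rewrites $\sum_m q^{2m^2\mp m}x^m$ as $(q^4;q^4)_\infty$ times the desired infinite product, and $(q^2;q^2)_\infty=(q^2;q^4)_\infty(q^4;q^4)_\infty$ then produces the denominator $(q^2;q^4)_\infty$.

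The one place demanding care, and the main obstacle, is \eqref{eq:wei-l4}. There the prefactor $\tfrac{1}{2(y/q-xy)}$ of Theorem \ref{thm-f} specializes to $\tfrac{x^{1/2}}{2(q+x)}$, so two separate half-powers $x^{1/2}$ (one from the prefactor, one from the odd-index sum) must combine into the integer power $x$; moreover the base-$q^4$ triple product for $\sum_m q^{2m^2+m}x^m$ naturally yields $(q^4;q^4)_\infty(-q^3x,-q/x;q^4)_\infty$ rather than the target $(-q^3x,-q^5/x;q^4)_\infty$, the two differing by the single factor $1+q/x$. I would resolve this by noting $(-q/x;q^4)_\infty=(1+q/x)(-q^5/x;q^4)_\infty$ and checking that $x(1+q/x)=q+x$ exactly cancels the denominator $q+x$, after which \eqref{eq:wei-l4} falls out. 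The parallel bookkeeping for \eqref{eq:wei-k4} is cleaner, since there the even-index sum matches $(-qx,-q^3/x;q^4)_\infty$ with no such shift and the half-powers of $x$ disappear automatically.
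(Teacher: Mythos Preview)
Your proposal is correct and follows essentially the same route as the paper: the authors also specialize $(x,y)\to(-x/q,\,q/x^{1/2})$ in Theorem~\ref{thm-e} and $(x,y)\to(-x/q^2,\,q^2/x^{1/2})$ in Theorem~\ref{thm-f}, then use Jacobi's triple product twice to bisect and re-sum exactly as you describe. Your handling of the extra factor $1+q/x$ in \eqref{eq:wei-l4} is in fact more explicit than the paper's, which simply asserts the final product form.
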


\begin{proof}
Choosing $(x,y)\to(-x/q,q/x^{\frac{1}{2}})$ in \eqref{eq:wei-e1} and
using \eqref{Jacobi}, we obtain
\begin{align*}
&\sum_{j,\,k,\,\ell\,\geq0}\frac{(x;q^2)_k(-1)^{k}x^{-\ell}}{(q;q)_j(q^2;q^2)_k(q^2;q^2)_{\ell}}
q^{(j+k+\ell)(j+k+\ell-1)+\ell^2+j+k+2\ell}
\notag\\[1mm]
&\quad=\frac{(q;q^2)_{\infty}}{2}\Big\{(-x^{\frac{1}{2}},-q/x^{\frac{1}{2}};q)_{\infty}+(x^{\frac{1}{2}},q/x^{\frac{1}{2}};q)_{\infty}\Big\}
\end{align*}
\begin{align*}
&\quad=\frac{(q;q^2)_{\infty}}{2(q;q)_{\infty}}\Big\{\sum_{k=-\infty}^{\infty}q^{\binom{k}{2}}x^{\frac{k}{2}}+\sum_{k=-\infty}^{\infty}(-1)^kq^{\binom{k}{2}}x^{\frac{k}{2}}\Big\}
\notag\\[1mm]
&\quad=\frac{1}{(q^2;q^2)_{\infty}}\sum_{k=-\infty}^{\infty}q^{\binom{2k}{2}}x^{k}\\
&\quad =\frac{(-qx,-q^{3}/x;q^4)_{\infty}}{(q^2;q^4)_{\infty}}.
\end{align*}
So we get hold of \eqref{eq:wei-k4}.

 Taking $(x,y)\to(-x/q^2,q^2/x^{\frac{1}{2}})$ in \eqref{eq:wei-f} and utilizing
\eqref{Jacobi}, we achieve
\begin{align*}
&\sum_{j,\,k,\,\ell\,\geq0}\frac{(x;q^2)_k(-1)^{k}x^{-\ell}}{(q;q)_j(q^2;q^2)_k(q^2;q^2)_{\ell}}
q^{(j+k+\ell)(j+k+\ell-1)+\ell^2+2j+3k+4\ell}
\notag\\[1mm]
&\quad=\frac{(q;q^2)_{\infty}}{2(q/x^{\frac{1}{2}}+x^{\frac{1}{2}})}\Big\{(-x^{\frac{1}{2}},-q/x^{\frac{1}{2}};q)_{\infty}-(x^{\frac{1}{2}},q/x^{\frac{1}{2}};q)_{\infty}\Big\}
\notag
\\
&\quad=\frac{(q;q^2)_{\infty}}{2(q/x^{\frac{1}{2}}+x^{\frac{1}{2}})(q;q)_{\infty}}\Big\{\sum_{k=-\infty}^{\infty}q^{\binom{k}{2}}x^{\frac{k}{2}}-\sum_{k=-\infty}^{\infty}(-1)^kq^{\binom{k}{2}}x^{\frac{k}{2}}\Big\}
\notag\\[1mm]
&\quad=\frac{1}{(q/x^{\frac{1}{2}}+x^{\frac{1}{2}})(q^2;q^2)_{\infty}}\sum_{k=-\infty}^{\infty}q^{\binom{1+2k}{2}}x^{\frac{1+2k}{2}}\\
&\quad =\frac{(-q^{3}x,-q^{5}/x;q^4)_{\infty}}{(q^2;q^4)_{\infty}}.
\end{align*}
Therefore, we complete the proof of \eqref{eq:wei-l4}.
\end{proof}

\textbf{Acknowledgments}

 The work is supported by Hainan Provincial Natural Science Foundation of China (No. 124RC511) and the National Natural Science Foundation of China (No. 12071103). We thank Ole Warnaar for some valuable comments, especially for
 pointing out that Corollary \ref{cor} is a special case of
of  Berkovich and Warnaar \cite[Equation (3.10)]{Berkovich}.


\end{document}